\def\imnum{\mathrm{i}\,}
\def\divv{\mathrm{d}}
\def\diff{\,\mathrm{d}}
\newdefinition{defn}{Definition}[section]
\newdefinition{rem}{Remark}
\newdefinition{exam}{Example}
\newtheorem{thm}{Theorem}[section]
\newtheorem{lem}[thm]{Lemma}
\newproof{proof}{Proof}
\renewcommand{\Re}{\mathop{\mathrm{Re}}\nolimits}
\renewcommand{\Im}{\mathop{\mathrm{Im}}\nolimits}
\DeclareMathOperator{\rme}{e}
\DeclareMathOperator{\arcsinh}{arcsinh}
\DeclareMathOperator{\sech}{sech}
\DeclareMathOperator{\Si}{Si}
\DeclareMathOperator{\Order}{O}
\DeclareMathOperator{\Bfunc}{B}
\renewcommand{\pi}{\piup}
\newcommand{\Nto}{N_{\text{total}}}
\newcommand{\textDE}{\text{\tiny{\rm{DE}}}}
\newcommand{\DEt}{\psi_{\textDE}}
\newcommand{\textDEzero}{\text{\tiny{\rm{(0,1)}}}}
\newcommand{\DEtZero}{\psi^{\textDEzero}_{\textDE}}
\newcommand{\DEtInv}{\DEt^{-1}}
\newcommand{\DEtDiv}{\DEt'}
\newcommand{\domD}{\mathscr{D}}
\numberwithin{equation}{section}
\journal{Elsevier}
\begin{document}

\begin{frontmatter}

%% Title, authors and addresses

%% use the tnoteref command within \title for footnotes;
%% use the tnotetext command for the associated footnote;
%% use the fnref command within \author or \address for footnotes;
%% use the fntext command for the associated footnote;
%% use the corref command within \author for corresponding author footnotes;
%% use the cortext command for the associated footnote;
%% use the ead command for the email address,
%% and the form \ead[url] for the home page:
%%
%% \title{Title\tnoteref{label1}}
%% \tnotetext[label1]{}
%% \author{Name\corref{cor1}\fnref{label2}}
%% \ead{email address}
%% \ead[url]{home page}
%% \fntext[label2]{}
%% \cortext[cor1]{}
%% \address{Address\fnref{label3}}
%% \fntext[label3]{}

\title{Explicit error bound for modified numerical iterated integration
by means of Sinc methods\tnoteref{thankslabel}}
\tnotetext[thankslabel]{This work was supported by Grant-in-Aid for
Young Scientists (B) Number 24760060.}

%% use optional labels to link authors explicitly to addresses:
%% \author[label1,label2]{<author name>}
%% \address[label1]{<address>}
%% \address[label2]{<address>}

\author{Tomoaki Okayama}

\address{Graduate School of Economics, Hitotsubashi University,
2-1 Naka, Kunitachi, Tokyo 186-8601, Japan}
\ead{tokayama@econ.hit-u.ac.jp}

\begin{abstract}
%% Text of abstract
This paper reinforces numerical iterated integration
developed by Muhammad--Mori in the following two points:
1)
%with considering
the approximation formula is modified
so that it can achieve a better convergence rate
in more general cases, and
%to decrease the total number of function evaluations, and
2) explicit error bound is given
in a computable form for the modified formula.
%The latter point is quite useful in application,
%because not only it guarantees the accuracy of the approximation,
%without use of the true value of the given integral,
%but also it can reduce the computational cost compared to
%a usual way that generates a sequence of approximation values.
The formula works quite efficiently,
especially if the integrand is of a product type.
Numerical examples that confirm it are also presented.
% that confirm the results
% for users.
\end{abstract}

\begin{keyword}
%% keywords here, in the form: keyword \sep keyword
Sinc quadrature
\sep Sinc indefinite integration
\sep repeated integral
\sep verified numerical integration
%\sep tanh transformation
\sep double-exponential transformation
%% MSC codes here, in the form: \MSC code \sep code
%% or \MSC[2008] code \sep code (2000 is the default)
\MSC 65D30 \sep 65D32 \sep 65G99
\end{keyword}

\end{frontmatter}

%%
%% Start line numbering here if you want
%%
% \linenumbers

%% main text

\setlength{\abovedisplayskip}{5pt}
\setlength{\belowdisplayskip}{5pt}

%#!latex ErrorEstimRepeatInt

\section{Introduction}
The concern of this paper is efficient approximation
of a two-dimensional iterated integral
% of the form
\begin{equation}
I = \int_a^b\left(\int_A^{q(x)}
f(x,y)
%\frac{f(x,y)}{(x-a)^{1-\alpha}(b-x)^{1-\beta}(y-A)^{1-\gamma}(B-y)^{1-\delta}}
\diff y\right)\diff x,
\label{eq:target-repeat-int}
\end{equation}
with giving its \emph{strict} error bound.
Here,
$q(x)$ is a monotone function that
may have derivative singularity at the endpoints of $[a,\,b]$,
and the integrand $f(x,y)$ also may have singularity on the boundary of the
square region $[a,\,b]\times [A,\,B]$
(%$B$ is a constant that satisfies $q(x)\leq B$.
see also Figs.~\ref{fig:monotone-increase} and~\ref{fig:monotone-decrease}).
In this case,
a Cartesian product rule of
a well known one-dimensional quadrature formula
(such as the Gaussian formula
and the Clenshaw--Curtis formula)
does not work properly, or at least
its mathematically-rigorous error bound is quite difficult to obtain,
because such formulas require the analyticity
of the integrand
in a neighbourhood of the boundary~\cite{eiermann89:_autom}.
%If one may employ 
%\begin{equation*}
%f(x,y) =
%\frac{g(x,y)}{(x-a)^{1-\alpha}(b-x)^{1-\beta}(y-A)^{1-\gamma}(B-y)^{1-\delta}},
%\end{equation*}
%where $g(x,y)$ is an analytic function, and
%$\alpha$, $\beta$, $\gamma$, and $\delta$ are positive constants.
\begin{figure}[htbp]
\begin{center}
\begin{minipage}{0.4\linewidth}
\begin{center}
 \includegraphics[scale=.75]{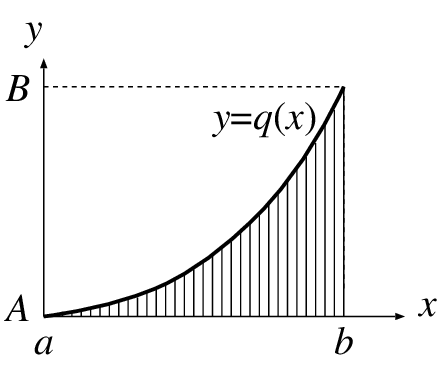}
 \caption{The domain of integration~\eqref{eq:target-repeat-int} when $q'(x)\geq 0$.}
 \label{fig:monotone-increase}
\end{center}
\end{minipage}
\begin{minipage}{0.05\linewidth}
\mbox{ }
\end{minipage}
\begin{minipage}{0.4\linewidth}
\begin{center}
 \includegraphics[scale=.75]{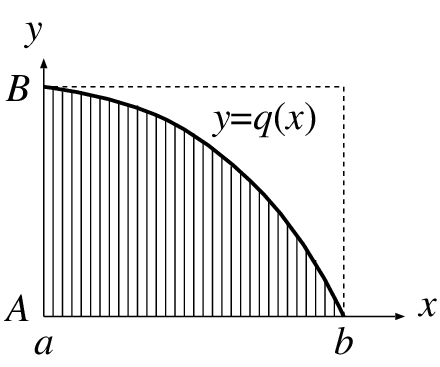}
 \caption{The domain of integration~\eqref{eq:target-repeat-int} when $q'(x)\leq 0$.}
 \label{fig:monotone-decrease}
\end{center}
\end{minipage}
\end{center}
\end{figure}

Promising quadrature formulas
that does not require the analyticity at the endpoints
may include the tanh formula~\cite{schwartz69:_numer},
the IMT formula~\cite{iri70:_certain,iri87:_certain},
and the double-exponential formula~\cite{takahasi74:_doubl},
which enjoy \emph{exponential convergence}
whether the integrand has such singularity or not.
Actually, based on the IMT formula,
an automatic integration algorithm for~\eqref{eq:target-repeat-int}
was developed~\cite{robinson81:_algor}.
Further improved version was developed as
d2lri~\cite{hill99:_d2lri}
and r2d2lri~\cite{robinson02:_algor},
where the lattice rule is employed with
the IMT transformation~\cite{iri70:_certain,iri87:_certain}
or the Sidi transformation~\cite{sidi93:_new_variab,sidi06:_exten}.
As a related study,
based on the double-exponential formula,
an automatic integration algorithm over a sphere was
developed~\cite{roose81:_autom},
which also intended to deal with such integrand singularity.
The efficiency of those algorithms are also suggested
by their numerical experiments.

From a mathematical viewpoint, however,
those algorithms do not guarantee the accuracy of the approximation
in reality.
In order to estimate the error (for giving a stop criterion),
Robinson and de Doncker~\cite{robinson81:_algor} considered
the sequence of the number of function evaluation points $\{N_m\}_{m}$ and
that of approximation values $\{I_{N_m}\}$,
and made the important assumption:
\begin{equation}
D_{N_m}:= |I_{N_m} - I_{N_{m-1}}| \simeq |I - I_{N_{m-1}}|,
\label{simeq:robinson-assump}
\end{equation}
which enables the error estimation
$|I- I_{N_m}|\simeq D_{N_m}^2/D_{N_{m-1}}$.
Similar approach was taken in the studies
described above~\cite{hill99:_d2lri,robinson02:_algor,roose81:_autom}.
The problem here is that
it is quite difficult to guarantee the validity
of~\eqref{simeq:robinson-assump},
although it had been widely accepted as a realistic practical assumption
for constructing automatic quadrature routines in that period.
The recent trend is that
the approximation error is bounded by a \emph{strict} inequality
(instead of estimation `$\simeq$') as
\begin{equation*}
|I - I_N|\leq E_N,
%\label{leq:general-computable-estimate}
\end{equation*}
where $E_N$ is given in a \emph{computable} form
(see, for example, Petras~\cite{petras07:_princ}).
Such an explicit error bound is desired
for constructing a more reliable,
\emph{verified numerical integration} routine.
In addition to the mathematical rigorousness,
such a bound gives us another advantage:
the sufficient number of $N$ for the required precision, say $N_0$,
can be known without generating the sequence $\{I_{N}\}$.
This means low computational cost,
since we do not have to compute for any $N$ with $N<N_0$
(and of course $N>N_0$).
%but just only one: $N=N_0$.

The objective of this study is
to give such an explicit error bound
for the numerical integration method developed by
Muhammad--Mori~\cite{muhammad05:_iterat}.
Their method is based on the Sinc
methods~\cite{stenger93:_numer,stenger00:_summar}
combined with
double-exponential transformation~\cite{mori01:_doubl,takahasi74:_doubl},
and it has the following two features:
\begin{enumerate}
 \item it has beautiful \emph{exponential accuracy}
even if $f(x,y)$ or $q(x)$ has boundary singularity, and
 \item it employs \emph{indefinite integration formula}
instead of quadrature formula for the inner integral.
%in order to decrease the number of function evaluation.
\end{enumerate}
The first point is the same feature as the studies
above~\cite{robinson81:_algor,roose81:_autom},
but the second point is a unique one.
%and it should need more detailed explanation.
If a standard quadrature rule is employed
to approximate the inner integral,
the weight $w_j$ and quadrature node $y_j$ should be adjusted
depending on $x$ as
\[
 \int_A^{q(x)}f(x, y)\diff y
\approx \sum_{j}w_j(x) f(x,y_j(x)),
\]
whereas in the case of an \emph{indefinite integration formula},
$y_j$ is fixed (independent of $x$) as
\[
 \int_A^{q(x)}f(x, y)\diff y
\approx \sum_{j}w_j(x) f(x,y_j).
\]
This independency on $x$ is quite useful to check
mathematical assumptions on the integrand $f(x,y)$
for the exponential accuracy.
Furthermore, as a special case,
when the integrand is of a product type:
$f(x,y)=X(x)Y(y)$,
the number of function evaluation
to approximate~\eqref{eq:target-repeat-int}
is drastically dropped from $\Order(n\times n)$ to $\Order(n+n)$,
where $n$ denotes the number of the terms of $\sum$
(it is also emphasized in the original paper~\cite{muhammad05:_iterat}).
%but also to construct a numerical scheme for integral equations
%that includes the integral of the form~\eqref{eq:target-repeat-int}.

However,
rigorous error analysis is not given for the formula,
and there is room for improvement in the convergence rate.
Moreover,
it cannot handle the case $q'(x)\leq 0$
(only the case $q'(x)\geq 0$ is considered).
% in the computational cost.
% their method needs the following three conditions:
%\begin{equation}
%q(a)=A,\quad q(b)=B,\quad q'(x)\geq 0,\label{MM-condition}
%\end{equation}
%which are not generally satisfied.
%rigorous error analysis is not given for their formula,
%and
In order to reinforce their formula, this study contributes in
the following points:
\begin{enumerate}
\setcounter{enumi}{2}
\item their formula is modified
so that it can achieve a better convergence rate
in both cases (i.e., the case $q'(x)\geq 0$ and $q'(x)\leq 0$),
%to improve the convergence rate,
%in order to decrease the total number of function evaluation,
% so that it can be used in more general cases,
%\item a formula for $q'(x)\leq 0$ is also derived,
and
\item a rigorous, explicit error bound is given for the modified formula.
\end{enumerate}
From the error bound in the latter point,
we can see that the convergence rate of the formula is
generally $\Order(\exp(-c \sqrt{n}/\log(\gamma \sqrt{n})))$,
and if $f(x,y)=X(x)Y(y)$,
it becomes $\Order(\exp(-c' n/\log(\gamma' n)))$.

The remainder of this paper is organized as follows.
In Section~\ref{sec:muhammad_mori},
after the review of basic formulas of Sinc methods,
Muhammad-Mori's original formula~\cite{muhammad05:_iterat}
is described.
Then, the formula is modified
in Section~\ref{sec:approx_formula},
and its explicit error bound is also presented.
Its proof is given in Section~\ref{sec:proofs}.
Numerical examples are shown in Section~\ref{sec:numer_exam}.
Section~\ref{sec:conclusion} is devoted to conclusion.

%#!latex ErrorEstimRepeatInt

\section{Review of Muhammad--Mori's approximation formula}
%{
%Sinc quadrature and Sinc indefinite integration and their explicit error bounds}
\label{sec:muhammad_mori}

In this section,
the approximation formula for~\eqref{eq:target-repeat-int}
derived by
Muhammad--Mori~\cite{muhammad05:_iterat} is described.
The idea is to use ``Sinc quadrature''
for the outer integral,
and to use ``Sinc indefinite integration''
for the inner integral. Those two approximation formulas are explained first.

\subsection{Sinc quadrature and Sinc indefinite integration combined with the DE transformation}

The Sinc quadrature and Sinc indefinite integration
are approximation formulas for definite integration
and indefinite integration, respectively, expressed as
\begin{align}
\int_{-\infty}^{\infty} G(\xi)\diff \xi
&\approx \tilde{h} \sum_{i=-M_{-}}^{M_{+}} G(i \tilde{h}),
\label{approx:Sinc-quad}\\
\int_{-\infty}^{\xi} G(\eta)\diff\eta
&\approx \sum_{j=-N_{-}}^{N_{+}} G(j h) J(j, h)(\xi),
\quad\xi\in\mathbb{R},\label{approx:Sinc-indef}
\end{align}
where $J(j,h)(\xi)$ is defined by using
the so-called sine integral $\Si(x)=\int_0^x\{(\sin\sigma)/\sigma\}\diff\sigma$
as
\[
 J(j,h)(\xi)=h\left\{\frac{1}{2}+\frac{1}{\pi}\Si[\pi(\xi/h - j)]\right\}.
\]
Although the formulas~\eqref{approx:Sinc-quad}
and~\eqref{approx:Sinc-indef} are
approximations on the whole real line $\mathbb{R}$,
those can be used on the finite interval $(a,\,b)$ as well,
%and $[A,\,B]$ as well,
by using the Double-Exponential (DE) transformation
\begin{align*}
x = \DEt(\xi)
&=\frac{b-a}{2}\tanh\left(\frac{\pi}{2}\sinh\xi\right)+\frac{b+a}{2}.
%y = \tDEt(\eta)
%&=\frac{B-A}{2}\tanh\left(\frac{\pi}{2}\sinh\eta\right)+\frac{B+A}{2}.
\end{align*}
Since $\DEt:\mathbb{R}\to (a,\,b)$,
% and $\tDEt:\mathbb{R}\to (A,\,B)$
we can apply the formulas~\eqref{approx:Sinc-quad}
and~\eqref{approx:Sinc-indef}
in the case of finite intervals
% ($x\in(a,\,b)$)
combining the DE transformation as
\begin{align}
\int_{a}^{b}g(x)\diff x
&=\int_{-\infty}^{\infty} g(\DEt(\xi))\DEtDiv(\xi)\diff \xi
\approx \tilde{h}
 \sum_{i=-M_{-}}^{M_{+}} g(\DEt(i \tilde{h}))\DEtDiv(i \tilde{h}),
\label{approx:DE-Sinc-quad}\\
\int_{a}^{x}g(y)\diff y
&=\int_{-\infty}^{\DEtInv(x)} g(\DEt(\eta))\DEtDiv(\eta)\diff\eta
\approx \sum_{j=-N_{-}}^{N_{+}} g(\DEt(j h))\DEtDiv(j h)
J(j, h)(\DEtInv(x)),
\quad x\in (a,\,b),
\label{approx:DE-Sinc-indef}
\end{align}
which are called the ``DE-Sinc quadrature''
and the ``DE-Sinc indefinite integration,''
proposed by
Takahasi--Mori~\cite{takahasi74:_doubl}
and
Muhammad--Mori~\cite{muhammad03:_doubl},
respectively.

\subsection{Muhammad--Mori's approximation formula}

Let the domain of integration~\eqref{eq:target-repeat-int}
be as in Fig.~\ref{fig:monotone-increase},
%i.e.,
%Using the monotonicity of $q(x)$,
i.e., $q(a)=A$, $q(b)=B$, and $q'(x)\geq 0$.
Using the monotonicity of $q(x)$,
Muhammad--Mori~\cite{muhammad05:_iterat}
%~\eqref{MM-condition}
%on the function $q(x)$, and
rewrote the given integral $I$ by applying $y=q(s)$ as
\begin{equation}
 I=\int_a^b\left(\int_A^{q(x)}
f(x,y)
\diff y\right)\diff x
=\int_a^b\left(
\int_a^x
f(x,q(s))q'(s)\diff s
\right)\diff x.
\label{eq:MM-pretransform}
\end{equation}
Note that $s\in (a,\,b)$ (i.e., not $(A,\,B)$).
Then, they applied~\eqref{approx:DE-Sinc-quad}
and~\eqref{approx:DE-Sinc-indef},
% the DE-Sinc quadrature
%and the DE-Sinc indefinite integration,
with taking $\tilde{h}=h$, $M_{-}=M_{+}=m$, and $N_{-}=N_{+}=n$ for simplicity,
as follows:
\begin{align*}
I&\approx h\sum_{i=-m}^{m}\DEtDiv(ih)
\left(
\int_a^{\DEt(ih)}f(\DEt(ih),q(s))q'(s)\diff s
\right)\\
&\approx h\sum_{i=-m}^{m}\DEtDiv(ih)
\left\{
\sum_{j=-n}^n f(\DEt(ih),q(\DEt(jh)))q'(\DEt(jh))\DEtDiv(jh)J(j,h)(ih)
\right\}.
%\label{approx:MM-formula}
\end{align*}
If we introduce
$x_i=\DEt(ih)$,
$w_j=\pi\cosh(jh) \sech^2(\pi\sinh(jh)/2) /4$,
and $\sigma_k=\Si[\pi k]/\pi$, which can be prepared
in prior to computation
(see also a value
table for $\sigma_k$~\cite[Table~1.10.1]{stenger93:_numer}),
the formula is rewritten as
\begin{equation}
I\approx
(b-a)^2h^2\sum_{i=-m}^m w_i
\left\{
\sum_{j=-n}^n f(x_i, q(x_j)) q'(x_j) w_j
\left(\frac{1}{2}+\sigma_{i-j}\right)
\right\}.
\label{approx:MM-formula}
\end{equation}
The total number of function evaluations,
% evaluations of the integrand $f$,
say $\Nto$,
of this formula is $\Nto=(2m+1)\times(2n+1)$.
%This formula requires 
%evaluations of the given function $f$.
As a special case,
if the integrand is of a product type: $f(x,y)=X(x)Y(y)$,
%as described in the introduction,
%
the formula is rewritten as
\begin{align}
I\approx
% h\sum_{i=-m}^m\sum_{j=-n}^n U(i) V(j) J(j,h)(ih)
%=
(b-a)^2h^2\sum_{i=-m}^m U(i)
\left\{\sum_{j=-n}^n V(j)
 \left(\frac{1}{2}+\sigma_{i-j}\right)
\right\},
\label{approx:MM-formula-product}
\end{align}
where $U(i)=X(x_i)w_i$ and
$V(j)=Y(q(x_j))q'(x_j)w_j$.
In this case,
we can see that
% the
%the total number of function evaluations of this formula is
%in this case
$\Nto=(2m+1)+(2n+1)$,
which is significantly smaller than $(2m+1)\times(2n+1)$.

They~\cite{muhammad05:_iterat} also roughly discussed the error rate
of the formula~\eqref{approx:MM-formula} as follows.
Let $\domD_d$ be a strip domain defined by
$\domD_d=\{\zeta\in\mathbb{C}:|\Im\zeta|<d\}$ for $d>0$.
Assume that
the integrand $g$ in~\eqref{approx:DE-Sinc-quad}
and~\eqref{approx:DE-Sinc-indef}
is analytic on $\DEt(\domD_d)$
(which means $g(\DEt(\cdot))$ is analytic on $\domD_d$),
and further assume that $g(x)$ behaves $\Order(((x-a)(b-x))^{\nu-1})$ ($\nu>0$)
as $x\to a$ and $x\to b$.
Under the assumptions with some additional mild conditions,
it is known that
the approximation~\eqref{approx:DE-Sinc-quad}
converges with
$\Order(\rme^{-2\pi d/h})$,
and the approximation~\eqref{approx:DE-Sinc-indef}
converges with
$\Order(h\rme^{-\pi d/h})$,
by taking $h=\tilde{h}$ and
%by taking
\[
 M_{+}=M_{-}=m=
\left\lceil \frac{1}{h}\log\left(\frac{4 d}{(\nu-\epsilon) h}\right)
\right\rceil,
\quad
 N_{+}=N_{-}=n
=\left\lceil \frac{1}{h}\log\left(\frac{2 d}{(\nu-\epsilon) h}\right)
\right\rceil,
\]
where $\epsilon$ is an arbitrary small positive number.
Therefore, if the same assumptions are satisfied
for both approximations in~\eqref{approx:MM-formula},
it enjoys exponential accuracy:
$\Order(h\rme^{-\pi d/h})$.
Since
$m\simeq n \simeq \sqrt{\Nto/4}$
and $h\simeq \log(cn)/n$
(where $c=2d/(\nu-\epsilon)$),
%($\Nto$ denotes the total number of function evaluations),
this can be interpreted in terms of $\Nto$ as
\begin{equation}
 \Order\left(\frac{\log(c\sqrt{\Nto/4})}{\sqrt{\Nto/4}}
\exp\left[
\frac{-\pi d \sqrt{\Nto/4}}{\log(c \sqrt{\Nto/4})}
\right]
\right).
\label{order:MM-orig}
\end{equation}
If the integrand is of a product type,
%: $f(x,y)=X(x)Y(y)$,
since $m\simeq n\simeq \Nto/4$,
it becomes
\begin{equation}
 \Order\left(\frac{\log(c\Nto/4)}{\Nto/4}
\exp\left[\frac{-\pi d (\Nto/4)}{\log(c \Nto/4)}
\right]
\right).
\label{order:MM-product}
\end{equation}
%in this case.

Although the convergence rate was roughly discussed as above,
the quantity of the approximation error cannot be
obtained because rigorous error bound was not given.
% that is not given in a computable form.
% the approximation formula~\eqref{approx:MM-formula} requires
%the conditions~\eqref{MM-condition} on $q(x)$,
%which are not generally satisfied.
%In the next section,
%firstly,
%the formula is modified to reduce the computational cost,
%and its \emph{computable} error bound is explicitly given.
%Furthermore,
%the result is extended to the case $q'(x)\leq 0$
%(see Fig.~\ref{fig:monotone-decrease}),
%whereas
Moreover,
%Muhammad--Mori~\cite{muhammad05:_iterat}
the case $q'(x)\leq 0$
(cf. Fig.~\ref{fig:monotone-decrease})
is not considered.
%whereas the case $q'(x)\geq 0$
%(cf. Fig.~\ref{fig:monotone-increase})
%is considered.
This situation will be improved in the next section.
%if 
%~\cite{takahasi74:_doubl}
%~\cite{muhammad03:_doubl}
%then the formula~\eqref{approx:Sinc-quad}
%works with the error rate $\Order()$

%, which enables
%us to use $\DEt$ for both formulas (i.e., $\tDEt$ is not needed).
%The explicit form of the approximation formula is
%\begin{align}
%I\approx
%h\sum_{i=-m}^m\DEtDiv(ih)
%\left\{
%\sum_{j=-n}^n f(\DEt(ih),q(\DEt(jh)))q'(\DEt(jh))\DEtDiv(jh)J(j,h)(ih)
%\right\},
%\end{align}
%where they took $h=\tilde{h}$ and

%\begin{defn}
%\label{defn:LC}
%Let $\domD$ be a simply connected domain which satisfies
%$(a,\,b)\subset\domD$,
%and let $K,\,\alpha,\,\beta$ be positive constants.
%Then $\LC_{K,\alpha,\beta}(\domD)$
%denotes the family of all functions $f$ that are analytic on $\domD$
%and satisfy for all $z$ in $\domD$ the condition that
%\begin{equation}
%|f(z)|\leq K |Q_{\alpha,\beta}(z)|,
%\label{Leq:LC}
%\end{equation}
%where $Q_{\alpha,\beta}(z)=(z-a)^{\alpha}(b-z)^{\beta}$.
%For simplicity, we write $Q_{1,1}(z)$ as $Q(z)$.
%\end{defn}

%#!latex ErrorEstimRepeatInt

\section{Main results: modified approximation formula and its explicit error bound}
\label{sec:approx_formula}

This section
is devoted to a description of a new approximation formula
and its error bound.
The proof of the error bound is given in Section~\ref{sec:proofs}.

\subsection{Modified approximation formula}

%Before applying the DE-Sinc quadrature~\eqref{approx:DE-Sinc-quad}
%and the DE-Sinc indefinite integration~\eqref{approx:DE-Sinc-indef},
In the approximations~\eqref{approx:DE-Sinc-quad}
and~\eqref{approx:DE-Sinc-indef},
Muhammad--Mori~\cite{muhammad05:_iterat}
set the mesh size as $\tilde{h}=h$ for simplicity,
but here, $\tilde{h}$ is selected as $\tilde{h}=2h$.
Furthermore, both $M_{-}=M_{+}$ and $N_{-}=N_{+}$ are \emph{not} assumed.
%rewrote the given integral $I$ as~\eqref{eq:MM-pretransform},
%which requires the conditions~\eqref{MM-condition} on $q(x)$.
%However, if one applies~\eqref{approx:DE-Sinc-quad}
%and~\eqref{approx:DE-Sinc-indef} directly to $I$
%without using~\eqref{eq:MM-pretransform},
%the conditions~\eqref{MM-condition} are not needed anymore.
Then, after applying $y=q(s)$ as in~\eqref{eq:MM-pretransform},
the modified formula is derived as
%the approximation formula is modified as
\begin{align*}
I
&\approx 2 h\sum_{i=-M_{-}}^{M_{+}} \DEtDiv(2ih)
\left(
\int_a^{\DEt(2ih)} f(\DEt(2ih), q(s))q'(s)\diff s
\right) \\%\nonumber\\
&\approx
2h\sum_{i=-M_{-}}^{M_{+}} \DEtDiv(2ih)
\left\{
\sum_{j=-N_{-}}^{N_{+}} f(\DEt(2ih), q(\DEt(jh)))
q'(\DEt(jh))
\DEtDiv(jh)
J(j,h)(2ih)
\right\},%\nonumber
\end{align*}
%The concrete form of the formula is
%\[
% I\approx I_{\textDE}
%:=h\sum_{i=-M_{-}}^{M_{+}} \DEtDiv(ih)
%\left\{
%\sum_{j=-N_{-}}^{N_{+}} f(\DEt(ih), \tDEt(j\tilde{h}))\tDEtDiv(j\tilde{h})
%J(j,\tilde{h})(\tDEtInv(q(\DEt(ih))))
%\right\}.
%\]
%As described in
%the subsequent theorem of the error bound,
%$h$ is taken as $h=2\tilde{h}$.
%From this, the formula is rewritten as
%If we introduce
%$x_i=\DEt(2ih)$,
%$w_j=\pi\cosh(jh) \sech^2(\pi\sinh(jh)/2) /4$,
%and $\sigma_k=\Si[\pi k]/\pi$, which can be prepared
%in prior to computation, the formula is rewritten as
which can be rewritten as
\begin{equation}
I\approx I_{\textDE}^{\text{\rm{inc}}}(h):=
2(b-a)^2h^2\sum_{i=-M_{-}}^{M_{+}} w_{2i}
\left\{
\sum_{i=-N_{-}}^{N_{+}} f(x_{2i}, q(x_j)) q'(x_j) w_j
\left(\frac{1}{2}+\sigma_{2i-j}\right)
\right\}.
\label{approx:O-DE-formula-inc}
\end{equation}
%where ,
%$x_i=\DEt(2i\tilde{h})$, and $y_j=\tDEt(j\tilde{h})$.
%The detailed way how to select
The positive integers
$M_{\pm}$ and $N_{\pm}$ are also selected depending on $h$,
which is explained in the subsequent theorem
that states the error bound.

The formula~\eqref{approx:O-DE-formula-inc}
is derived in the case $q'(x)\geq 0$ (cf. Fig.~\ref{fig:monotone-increase}),
but
in the case $q'(x)\leq 0$ (cf. Fig.~\ref{fig:monotone-decrease}) as well,
we can derive the similar formula as follows.
First, applying $y=q(s)$, we have
\begin{align*}
 I=\int_a^b\left(\int_A^{q(x)}f(x,y)\diff y\right)\diff x
&=\int_a^b\left(\int_x^b
f(x,q(s))\{-q'(s)\}\diff s
\right)\diff x\\
&=\int_a^b\left(\int_a^b
f(x,q(s))\{-q'(s)\}\diff s
-\int_a^x
f(x,q(s))\{-q'(s)\}\diff s
\right)\diff x.
\end{align*}
Then, apply~\eqref{approx:DE-Sinc-quad}
and~\eqref{approx:DE-Sinc-indef} to obtain
\begin{equation*}
I\approx
2h\sum_{i=-M_{-}}^{M_{+}} \DEtDiv(2ih)
\left\{
\sum_{j=-N_{-}}^{N_{+}} f(\DEt(2ih), q(\DEt(jh)))
\{-q'(\DEt(jh))\}
\DEtDiv(jh)
\left(h - J(j,h)(2ih)\right)
\right\}.%\nonumber
\end{equation*}
Here, $\lim_{\xi\to\infty}J(j,h)(\xi)=h$ is used.
This approximation can be rewritten as
\begin{equation}
I\approx I_{\textDE}^{\text{\rm{dec}}}(h):=
2(b-a)^2h^2\sum_{i=-M_{-}}^{M_{+}} w_{2i}
\left\{
\sum_{i=-N_{-}}^{N_{+}} f(x_{2i}, q(x_j)) \{-q'(x_j)\} w_j
\left(\frac{1}{2}-\sigma_{2i-j}\right)
\right\}.
\label{approx:O-DE-formula-dec}
\end{equation}
The formulas~\eqref{approx:O-DE-formula-inc}
and~\eqref{approx:O-DE-formula-dec}
inherit the advantage of
Muhammad--Mori's one in the sense that
$\Nto=(M_{-}+M_{+}+1)\times (N_{-}+N_{+}+1)$ in general,
but if the integrand is of a product type: $f(x,y)=X(x)Y(y)$,
it becomes $\Nto=(M_{-}+M_{+}+1)+ (N_{-}+N_{+}+1)$,
which is easily confirmed
by rewriting it
in the same way as~\eqref{approx:MM-formula-product}.
Furthermore, it also inherits (or even enhances)
the \emph{exponential} accuracy,
which is described next.
% this formula does not require the conditions~\eqref{MM-condition}.

%which can be expressed in a simpler form as
%\begin{equation*}
%I\approx I_{\textDE}:=
%\sum_{i=-M_{-}}^{M_{+}} w_i
%\sum_{j=-N_{-}}^{N_{+}} \tilde{w}_{i,j} f(x_i, y_j),
%\end{equation*}
%by introducing $x_i=\DEt(ih)$, $y_j=\tDEt(j\tilde{h})$, $w_i= h\DEtDiv(ih)$,
%and
%$\tilde{w}_{i,j}=\tDEtDiv(j\tilde{h})J(j,\tilde{h})(\tDEtInv(q(x_i)))$.

\subsection{Explicit error bound of the modified formula}

For positive constants $\kappa$, $\lambda$
and $d$ with $0<d<\pi/2$,
let us define $c_{\kappa,\lambda,d}$ as
\[
 c_{\kappa,\lambda,d}
=\frac{1}{\cos^{\kappa+\lambda}(\frac{\pi}{2}\sin d)\cos d},
\]
and define $\rho_{\kappa}$ as
\begin{equation*}
\rho_{\kappa}=
\begin{cases}
\arcsinh\left(
\frac{\sqrt{1+\sqrt{1-(2\pi \kappa)^2}}}{2 \pi \kappa}
\right)
& (0<\kappa<1/(2\pi)),\\
\arcsinh(1)&(1/(2\pi)\leq \kappa).
\end{cases}
%\label{Def:w_nu}
\end{equation*}
Then, the errors of $I_{\textDE}^{\text{\rm{inc}}}(h)$
and $I_{\textDE}^{\text{\rm{dec}}}(h)$
are estimated as stated below.

\begin{thm}
\label{thm:main-result}
Let $\alpha$, $\beta$, $\gamma$, $\delta$, and $K$
be positive constants, and $d$ be a constant with $0<d<\pi/2$.
Assume the following conditions:
\begin{enumerate}
 \item $q$ is analytic and bounded in $\DEt(\domD_d)$,
% and $q(z)\in\tDEt(\domD_d)$
%for all $z\in\DEt(\domD_d)$,
 \item $f(\cdot,q(w))$ is analytic in $\DEt(\domD_d)$
for all $w\in\DEt(\domD_d)$,
 \item $f(z,q(\cdot))$ is analytic in $\DEt(\domD_d)$
for all $z\in\DEt(\domD_d)$,
 \item it holds for all $z\in\DEt(\domD_d)$ and $w\in\DEt(\domD_d)$ that
\begin{equation}
 |f(z,q(w))q'(w)|
\leq K |z-a|^{\alpha-1}|b-z|^{\beta-1}|w-a|^{\gamma-1}|b-w|^{\delta-1}.
\label{leq:bound-f-q}
\end{equation}
\end{enumerate}
Let ${\mu}=\min\{\alpha,\beta\}$,
$\overline{\mu}=\max\{\alpha,\beta\}$,
${\nu}=\min\{\gamma,\delta\}$,
$\overline{\nu}=\max\{\gamma,\delta\}$,
let $\tilde{h}=2h$,
let $n$ and $m$ be positive integers defined by
\begin{equation}
n=\left\lceil
\frac{1}{h}\log\left(\frac{2d}{\nu h}\right)
\right\rceil,
\quad
m=\left\lceil
\frac{1}{2}\left\{
n + \frac{1}{h}\log\left(\frac{\mu}{\nu}\right)
\right\}
\right\rceil,
\label{eq:m-and-n-def}
\end{equation}
and let $M_{-}$ and $M_{+}$ be positive integers defined by
\begin{equation}
\begin{cases}
M_{-}=m,\quad M_{+}=m-\lfloor\log(\beta/\alpha)/\tilde{h}\rfloor
 & \quad (\text{if}\,\,\,\mu = \alpha),\\
M_{+}=m,\quad M_{-}=m-\lfloor\log(\alpha/\beta)/\tilde{h}\rfloor
 & \quad (\text{if}\,\,\,\mu = \beta),
\end{cases}
\label{DE-Sinc-func-approx-M-def}
\end{equation}
and let $N_{-}$ and $N_{+}$ be positive integers defined by
\begin{equation}
\begin{cases}
N_{-}=n,\quad N_{+}=n-\lfloor\log(\delta/\gamma)/(h)\rfloor
 & \quad (\text{if}\,\,\,\nu = \gamma),\\
N_{+}=n,\quad N_{-}=n-\lfloor\log(\gamma/\delta)/(h)\rfloor
 & \quad (\text{if}\,\,\,\nu = \delta),
\end{cases}
\label{DE-Sinc-func-approx-N-def}
\end{equation}
and let $h$ $(>0)$ be taken sufficiently small so that
\begin{equation*}
 M_{-}\tilde{h}\geq \rho_{\alpha},\quad
 M_{+}\tilde{h}\geq \rho_{\beta},\quad
 N_{-}h\geq \rho_{\gamma},\quad
 N_{+}h\geq \rho_{\delta}
%\label{leq:Mh-Nh-lower-bound}
\end{equation*}
are all satisfied. Then, if $q'(x)\geq 0$, it holds that
\begin{align}
&|I - I_{\textDE}^{\text{\rm{inc}}}(h)|\nonumber\\&\leq
\left[
\frac{\Bfunc(\gamma,\delta)c_{\gamma,\delta,d}}{\mu}
\left\{
\rme^{\frac{\pi}{2}\overline{\mu}}
+\frac{2c_{\alpha,\beta,d}}{1-\rme^{-\pi d/h}}
\right\}
+\frac{1}{\nu}
\left\{\Bfunc(\alpha,\beta)+\frac{4 c_{\alpha,\beta,d}}{\mu}
\frac{\rme^{-\pi d/h}}{1-\rme^{-\pi d /h}}
\right\}
\left\{
1.1\rme^{\frac{\pi}{2}\overline{\nu}}
+\frac{hc_{\gamma,\delta,d}}{d(1-\rme^{-2\pi d/h})}
\right\}
\right]\nonumber\\
&\quad\times
2K (b-a)^{\alpha+\beta+\gamma+\delta-2} \rme^{-\pi d/h},
\label{leq:main-result-error-bound}
\end{align}
where $\Bfunc(\kappa,\lambda)$ is the beta function.
If $q'(x)\leq 0$,
$|I - I_{\textDE}^{\text{\rm{dec}}}(h)|$
is bounded by the same term on the right hand side
of~\eqref{leq:main-result-error-bound}.
%, it holds that
%\begin{align}
%&|I - I_{\textDE}^{\text{\rm{inc}}}(h)|\nonumber\\&\leq
%\left[
%\frac{\Bfunc(\gamma,\delta)c_{\gamma,\delta,d}}{\mu}
%\left\{
%\rme^{\frac{\pi}{2}\overline{\mu}}
%+\frac{2c_{\alpha,\beta,d}}{1-\rme^{-\pi d/h}}
%\right\}
%+\frac{1}{\nu}
%\left\{\Bfunc(\alpha,\beta)+\frac{4 c_{\alpha,\beta,d}}{\mu}
%\frac{\rme^{-\pi d/h}}{1-\rme^{-\pi d /h}}
%\right\}
%\left\{
%1.1\rme^{\frac{\pi}{2}\overline{\nu}}
%+\frac{hc_{\gamma,\delta,d}}{d(1-\rme^{-2\pi d/h})}
%\right\}
%\right]\nonumber\\
%&\quad\times
%2K (b-a)^{\alpha+\beta+\gamma+\delta-2} \rme^{-\pi d/h},
%\label{leq:main-result-error-bound-2}
%\end{align}
%\begin{align}
%|I - I_{\textDE}^{\text{\rm{dec}}}(h)|&\leq
%\left[
%\frac{\Bfunc(\gamma,\delta)c_{\gamma,\delta,d}}{\mu}
%\left\{
%\rme^{\frac{\pi}{2}\overline{\mu}}
%+\frac{2c_{\alpha,\beta,d}}{1-\rme^{-\pi d/h}}
%\right\}
%+\frac{\Bfunc(\alpha,\beta)+(\pi h/2^{\alpha+\beta-1})}{\nu}
%\left\{
%1.1\rme^{\frac{\pi}{2}\overline{\nu}}
%+\frac{hc_{\gamma,\delta,d}}{d(1-\rme^{-2\pi d/h})}
%\right\}
%\right] \nonumber \\
%&\quad\times
%2K (b-a)^{\alpha+\beta+\gamma+\delta-2} \rme^{-\pi d/h}.
%\label{leq:main-result-error-bound-2}
%\end{align}
\end{thm}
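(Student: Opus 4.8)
The plan is to isolate the two approximation steps --- the DE-Sinc indefinite integration used for the inner integral and the DE-Sinc quadrature used for the outer one --- and to bound each by the explicit, computable error estimates for DE-Sinc quadrature and DE-Sinc indefinite integration (which I would record beforehand as preliminary lemmas, as in the explicit-constant Sinc-method theory). Treat the increasing case first; after the substitution $y=q(s)$ of~\eqref{eq:MM-pretransform}, put
\[
F(x)=\int_a^x f(x,q(s))q'(s)\diff s,\qquad
\tilde{F}(x)=\sum_{j=-N_{-}}^{N_{+}}f(x,q(\DEt(jh)))\,q'(\DEt(jh))\,\DEtDiv(jh)\,J(j,h)(\DEtInv(x)),
\]
so that $I=\int_a^b F(x)\diff x$ and $I_{\textDE}^{\text{\rm{inc}}}(h)=2h\sum_{i=-M_{-}}^{M_{+}}\DEtDiv(2ih)\,\tilde{F}(\DEt(2ih))$. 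Then
\begin{align*}
I-I_{\textDE}^{\text{\rm{inc}}}(h)
={}&\underbrace{\left(\int_a^b F(x)\diff x-2h\sum_{i=-M_{-}}^{M_{+}}\DEtDiv(2ih)\,F(\DEt(2ih))\right)}_{\text{(I)}}\\
&{}+\underbrace{2h\sum_{i=-M_{-}}^{M_{+}}\DEtDiv(2ih)\bigl(F(\DEt(2ih))-\tilde{F}(\DEt(2ih))\bigr)}_{\text{(II)}},
\end{align*}
where (I) is the DE-Sinc quadrature error for the exact inner function $F$ and (II) is a quadrature-weighted sum of the pointwise indefinite-integration errors. The mesh is doubled, $\tilde h=2h$, precisely so that the discretization part of (I), of order $\rme^{-2\pi d/\tilde h}=\rme^{-\pi d/h}$, matches the $\rme^{-\pi d/h}$ rate of the Sinc indefinite integration entering (II); hence both pieces carry the common factor $\rme^{-\pi d/h}$.

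To handle (I), first continue $F$ analytically to $\DEt(\domD_d)$: for $\zeta\in\domD_d$ define $F(\DEt(\zeta))$ by integrating $s\mapsto f(\DEt(\zeta),q(s))q'(s)$ along the horizontal line of the strip through $\zeta$, analyticity following from conditions 2--3 together with the uniform integrability supplied by~\eqref{leq:bound-f-q}. Inserting~\eqref{leq:bound-f-q} and using the standard DE-transformation estimate $\int|\DEt(\eta)-a|^{\gamma-1}|b-\DEt(\eta)|^{\delta-1}|\DEtDiv(\eta)|\,|\mathrm{d}\eta|\le c_{\gamma,\delta,d}(b-a)^{\gamma+\delta-1}\Bfunc(\gamma,\delta)$ along any horizontal line of $\domD_d$ (the beta function arising as $\int_a^b(s-a)^{\gamma-1}(b-s)^{\delta-1}\diff s$) yields $|F(z)|\le K\,\Bfunc(\gamma,\delta)\,c_{\gamma,\delta,d}(b-a)^{\gamma+\delta-1}|z-a|^{\alpha-1}|b-z|^{\beta-1}$ on $\DEt(\domD_d)$. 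Feeding this bound into the explicit DE-Sinc quadrature error estimate for integrands with endpoint behaviour $|z-a|^{\alpha-1}|b-z|^{\beta-1}$ --- whose discretization term carries $c_{\alpha,\beta,d}/(1-\rme^{-\pi d/h})$, and whose truncation term is driven down to $\Order(\rme^{-\pi d/h})$ by the choice~\eqref{DE-Sinc-func-approx-M-def} of $M_{\pm}$ and the conditions $M_{\pm}\tilde h\ge\rho_{\alpha},\rho_{\beta}$ (which place the monotone majorant of the tail on its decreasing branch) --- delivers the first bracketed factor of~\eqref{leq:main-result-error-bound}, up to the overall factor $2K(b-a)^{\alpha+\beta+\gamma+\delta-2}\rme^{-\pi d/h}$.

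For (II), at each real node $x=\DEt(2ih)$ I apply the explicit DE-Sinc indefinite integration error estimate to $s\mapsto f(x,q(s))q'(s)$; by~\eqref{leq:bound-f-q} the factor $K|x-a|^{\alpha-1}|b-x|^{\beta-1}$ comes out and what remains is $\tfrac1\nu(b-a)^{\gamma+\delta-1}\bigl\{1.1\,\rme^{\frac\pi2\overline\nu}+\tfrac{h\,c_{\gamma,\delta,d}}{d(1-\rme^{-2\pi d/h})}\bigr\}\rme^{-\pi d/h}$, where $1.1$ is the uniform bound of $|\tfrac12+\tfrac1\pi\Si(\pi t)|$ over $\Rset$ (so $|J(j,h)(\xi)|\le1.1\,h$ for real $\xi$), the extra $h/(d(1-\rme^{-2\pi d/h}))$ being the discretization factor characteristic of Sinc indefinite integration, and the truncation part being forced to $\Order(\rme^{-\pi d/h})$ by the choice~\eqref{DE-Sinc-func-approx-N-def} of $N_{\pm}$ with $N_{\pm}h\ge\rho_{\gamma},\rho_{\delta}$. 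It remains to sum over the outer nodes: $2h\sum_{i=-M_{-}}^{M_{+}}\DEtDiv(2ih)|x_{2i}-a|^{\alpha-1}|b-x_{2i}|^{\beta-1}$ is itself a truncated DE-Sinc quadrature sum (mesh $2h$) for $\int_a^b(x-a)^{\alpha-1}(b-x)^{\beta-1}\diff x=(b-a)^{\alpha+\beta-1}\Bfunc(\alpha,\beta)$; since every term is nonnegative, discarding the truncation only enlarges it, and the same explicit quadrature estimate bounds it by $(b-a)^{\alpha+\beta-1}\bigl\{\Bfunc(\alpha,\beta)+\tfrac{4c_{\alpha,\beta,d}}{\mu}\tfrac{\rme^{-\pi d/h}}{1-\rme^{-\pi d/h}}\bigr\}$. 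Multiplying the two factors produces the second bracketed factor of~\eqref{leq:main-result-error-bound}, again against $2K(b-a)^{\alpha+\beta+\gamma+\delta-2}\rme^{-\pi d/h}$.

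Adding the bounds for (I) and (II) and collecting this common factor gives~\eqref{leq:main-result-error-bound}. The decreasing case $q'(x)\le0$ is identical, carried out on $I=\int_a^b\bigl(\int_a^b f(x,q(s))(-q'(s))\diff s-\int_a^x f(x,q(s))(-q'(s))\diff s\bigr)\diff x$: the only changes are that $q'$ is replaced by $-q'$ and $J(j,h)(\xi)$ by $h-J(j,h)(\xi)=h\{\tfrac12-\tfrac1\pi\Si(\pi(\xi/h-j))\}$, and since $|\tfrac12-\tfrac1\pi\Si(\pi t)|$ obeys the same uniform bound and $\int_a^b(s-a)^{\gamma-1}(b-s)^{\delta-1}\diff s$ is unchanged, each estimate carries over verbatim, so $|I-I_{\textDE}^{\text{\rm{dec}}}(h)|$ obeys the same bound. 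The main obstacle I anticipate is twofold: the analytic continuation and contour estimate for $F$ on $\DEt(\domD_d)$, where the contour and the bounds on $|\DEt(\zeta)-a|,|b-\DEt(\zeta)|,|\DEtDiv(\zeta)|$ must be arranged so that precisely the constant $c_{\gamma,\delta,d}$ emerges; and the bookkeeping which, under the choices~\eqref{eq:m-and-n-def}--\eqref{DE-Sinc-func-approx-N-def} and the conditions $M_{\pm}\tilde h\ge\rho_{\alpha},\rho_{\beta}$, $N_{\pm}h\ge\rho_{\gamma},\rho_{\delta}$, makes all truncation contributions collapse into the stated closed-form constants rather than merely into $\Order(\rme^{-\pi d/h})$.
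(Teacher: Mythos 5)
Your proposal is correct and follows essentially the same route as the paper: the same splitting into the outer DE-Sinc quadrature error for $F(x)=\int_a^x f(x,q(s))q'(s)\diff s$ and the node-weighted sum of inner DE-Sinc indefinite-integration errors, the same key bound $|F(z)|\le K\Bfunc(\gamma,\delta)c_{\gamma,\delta,d}(b-a)^{\gamma+\delta-1}|z-a|^{\alpha-1}|b-z|^{\beta-1}$ obtained by a contour estimate along horizontal lines of the strip, and the same treatment of the weighted sum $\tilde h\sum_i\DEtDiv(i\tilde h)(\DEt(i\tilde h)-a)^{\alpha-1}(b-\DEt(i\tilde h))^{\beta-1}$ via extension to the bi-infinite sum and the quadrature discretization estimate, with the decreasing case handled verbatim. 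The explicit-constant lemmas you propose to record beforehand are exactly the cited results of Okayama et al.\ that the paper invokes, so no substantive difference remains.
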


The convergence rate of~\eqref{leq:main-result-error-bound}
%and~\eqref{leq:main-result-error-bound-2}
is $\Order(\rme^{-\pi d/h})$,
which can be interpreted in terms of $\Nto$ as follows.
Since $n\simeq N_{-}\simeq N_{+}$ and
$m\simeq M_{-}\simeq M_{+}\simeq (n/2)$,
we can see $\Nto\simeq ((n/2)+(n/2)+1)(n+n+1)\simeq 2n^2$.
From this and $h\simeq \log(c'n)/n$
(where $c'=2d/\nu$),
the convergence rate of the modified formula is
\[
 \Order\left(
\exp\left[
\frac{-\pi d \sqrt{\Nto/2}}{\log(c' \sqrt{\Nto/2})}
\right]
\right).
\]
This rate is better than
Muhammad--Mori's one~\eqref{order:MM-orig}.
If the integrand is of a product type: $f(x,y)=X(x)Y(y)$,
it becomes
\[
  \Order\left(
\exp\left[\frac{-\pi d (\Nto/3)}{\log(c \Nto/3)}
\right]
\right),
\]
since $\Nto\simeq ((n/2)+(n/2)+1)+(n+n+1)\simeq 3n$
in this case.
This rate is also better than
Muhammad--Mori's one~\eqref{order:MM-product}.

\begin{rem}
The inequality~\eqref{leq:main-result-error-bound}
%and~\eqref{leq:main-result-error-bound-2}
states the bound of the \emph{absolute} error, say $E^{\text{\rm{abs}}}(h)$.
If necessary, the bound of the \emph{relative} error
$E^{\text{\rm{rel}}}(h)$
is also obtained as follows:
\[
E^{\text{\rm{rel}}}(h)=
 \frac{|I - I_{\textDE}^{\text{\rm{inc}}}(h)|}{|I|}
\leq  \frac{|E^{\text{\rm{abs}}}(h)|}{|I|}
\leq \frac{|E^{\text{\rm{abs}}}(h)|}{||I_{\textDE}^{\text{\rm{inc}}}(h)| - E^{\text{\rm{abs}}}(h)|}.
\]
\end{rem}

%#!latex ErrorEstimRepeatInt

\section{Numerical examples}
\label{sec:numer_exam}

In this section, numerical results of
Muhammad-Mori's original formula~\cite{muhammad05:_iterat}
and modified formula are presented.
The results of an existing library: r2d2lri~\cite{robinson02:_algor},
which can properly handle boundary singularity in $q(x)$ and $f(x,y)$,
are also shown.
The computation was done on Mac OS X 10.6,
Mac Pro two 2.93~GHz 6-Core Intel Xeon with 32 GB DDR3 ECC SDRAM.
The computation programs were implemented
in C/C++ with double-precision floating-point arithmetic,
and compiled by GCC 4.0.1 with no optimization.
The following three examples were conducted.

\begin{exam}[{\rm The integrand and boundary function are smooth~\cite[Example~2]{muhammad05:_iterat}}]
\label{exam:smooth}
\[
 \int_0^{\sqrt{2}}
\left(\int_0^{x^2/2}\frac{\divv y}{x+y+(1/2)}\right)\diff x
=-\left(\sqrt{2}+\frac{1}{2}\right)
\log\left(1+2\sqrt{2}\right) + 2\left(1+\sqrt{2}\right)
\log\left(1+\sqrt{2}\right)-\sqrt{2}.
\]
\end{exam}
\begin{exam}[{\rm Derivative singularity exists in the integrand
and boundary function~\cite[Example~1]{muhammad05:_iterat}}]
\label{exam:deriv-singular}
\[
 \int_0^1\left(\int_0^{\sqrt{1-(1-x)^2}}\sqrt{1-y^2}\diff y\right)\diff x
=\frac{2}{3}.
\]
\end{exam}
\begin{exam}[{\rm The integrand is weakly singular at the origin~\cite[Example~27]{hill99:_d2lri}}]
\label{exam:weak-singular}
\[
 \int_0^1\left(\int_0^{1-x}\frac{\divv y}{\sqrt{xy}}\right)\diff x
=\pi.
\]
\end{exam}

In the case of Example~\ref{exam:smooth},
the assumptions in Theorem~\ref{thm:main-result}
are satisfied with $\alpha=\beta=\delta=1$, $\gamma=2$,
$d=\log(2)$, and $K=16.6$.
The results are shown in Figs.~\ref{fig:smooth-N} and~\ref{fig:smooth-t}.
In both figures, error bound (say $\tilde{E}^{\text{rel}}(h)$)
given by Theorem~\ref{thm:main-result}
surely includes the observed relative error $E^{\text{rel}}(h)$
in the form $E^{\text{rel}}(h) \leq \tilde{E}^{\text{rel}}(h)$,
which is also true in all the subsequent examples
(note that such error bound is not given for Muhammad--Mori's original formula).
In view of the performance,
r2d2lri is better than original/modified formulas,
but its error estimate just claims
$E^{\text{rel}}(h) \approx \tilde{E}^{\text{rel}}(h)$,
and does not guarantee
$E^{\text{rel}}(h) \leq \tilde{E}^{\text{rel}}(h)$ mathematically.

In the case of Example~\ref{exam:deriv-singular},
the assumptions in Theorem~\ref{thm:main-result}
are satisfied with $\alpha=\beta=1$, $\gamma=1/2$, $\delta=3$,
$d=1$, and $K=1.63$.
The results are shown in Figs.~\ref{fig:deriv-singular-N}
and~\ref{fig:deriv-singular-t}.
In this case, the convergence of the
original/modified formulas is incredibly fast
compared to r2d2lri.
This is because the integrand is
of a product type: $f(x,y)=X(x)Y(y)$.
%Furthermore,
%we can observe
%$E^{\text{rel}}(h) \leq \tilde{E}^{\text{rel}}(h)$ in both figures.

% for Example~\ref{exam:smooth}
The integrand of Example~\ref{exam:weak-singular}
is also of a product type.
In this example,
the assumptions in Theorem~\ref{thm:main-result}
are satisfied with $\alpha=\delta=1/2$, $\beta=\gamma=1$,
$d=4/3$, and $K=1$.
The results are shown in Figs.~\ref{fig:weak-singular-N}
and~\ref{fig:weak-singular-t}.
In this case, the performance of r2d2lri
is much worse than that in Example~\ref{exam:deriv-singular},
which seems to be due to the singularity of the integrand.
In contrast, the modified formula attains the similar
convergence rate to that in Example~\ref{exam:deriv-singular}.
Muhammad--Mori's original formula cannot be used in this case
since $q(x)=1-x$ does not satisfy $q'(x)\geq 0$.

\begin{figure}[htbp]
\begin{center}
\begin{minipage}{0.45\linewidth}
\includegraphics[width=\linewidth]{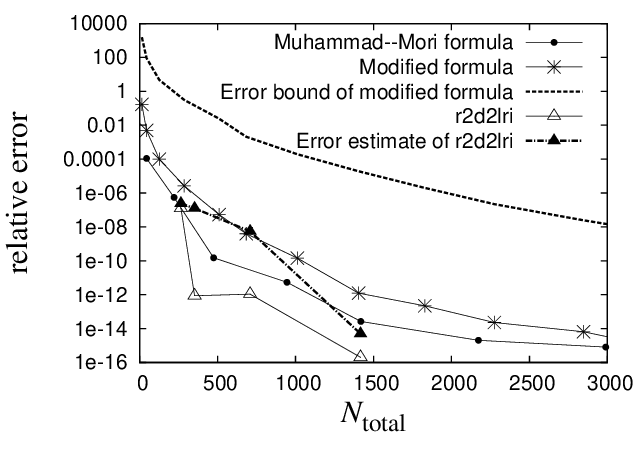}
\caption{Relative error with respect to $N_{\text{total}}$ in Example~\ref{exam:smooth}.}
\label{fig:smooth-N}
\end{minipage}
\begin{minipage}{0.05\linewidth}
\mbox{ }
\end{minipage}
\begin{minipage}{0.45\linewidth}
\includegraphics[width=\linewidth]{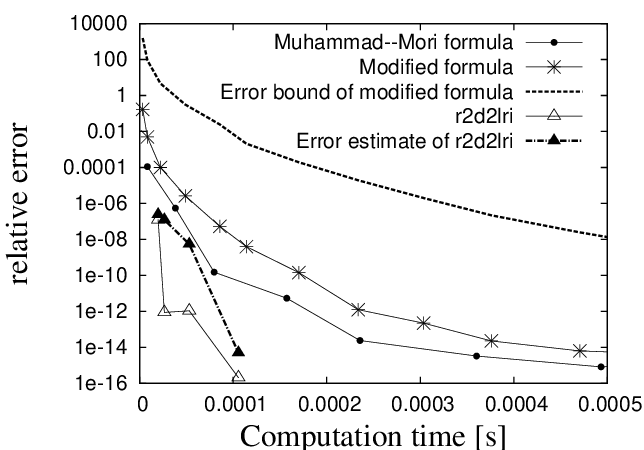}
\caption{Relative error with respect to computation time in Example~\ref{exam:smooth}.}
\label{fig:smooth-t}
\end{minipage}
\end{center}

\begin{center}
\begin{minipage}{0.45\linewidth}
\includegraphics[width=\linewidth]{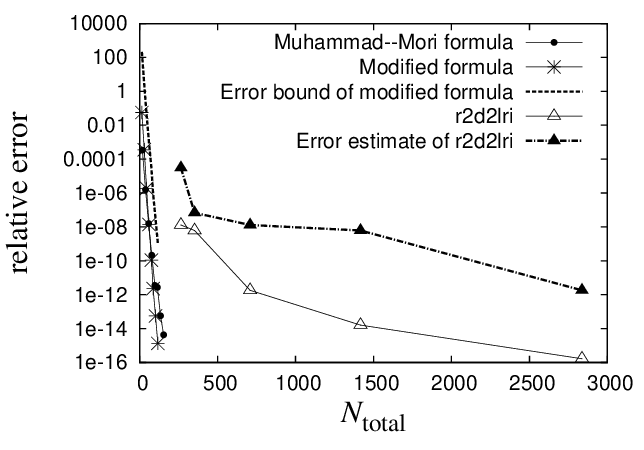}
\caption{Relative error with respect to $N_{\text{total}}$ in Example~\ref{exam:deriv-singular}.}
\label{fig:deriv-singular-N}
\end{minipage}
\begin{minipage}{0.05\linewidth}
\mbox{ }
\end{minipage}
\begin{minipage}{0.43\linewidth}
\includegraphics[width=\linewidth]{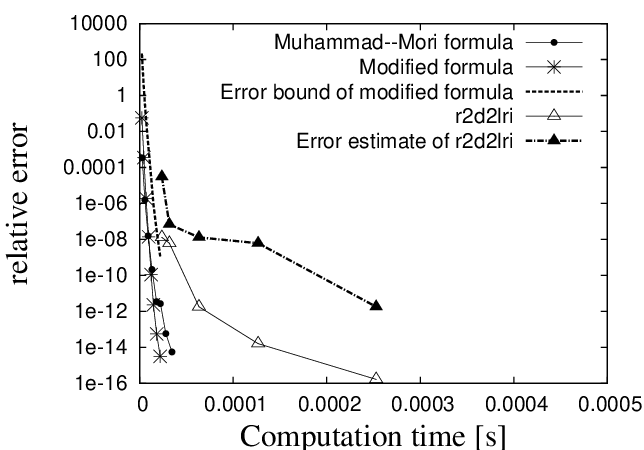}
\caption{Relative error with respect to computation time in Example~\ref{exam:deriv-singular}.}
\label{fig:deriv-singular-t}
\end{minipage}
\end{center}

\begin{center}
\begin{minipage}{0.45\linewidth}
\includegraphics[width=\linewidth]{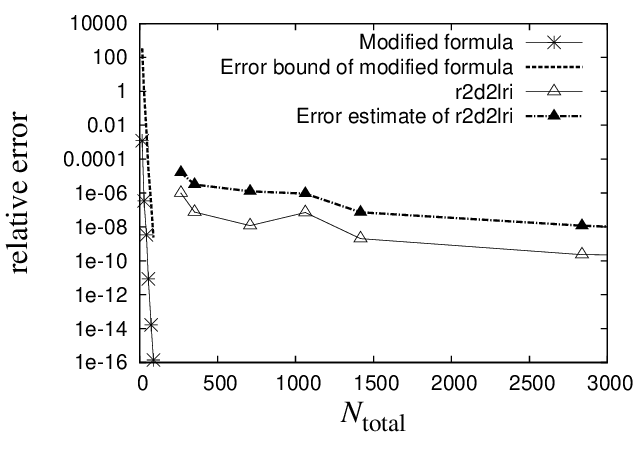}
\caption{Relative error with respect to $N_{\text{total}}$ in Example~\ref{exam:weak-singular}.}
\label{fig:weak-singular-N}
\end{minipage}
\begin{minipage}{0.05\linewidth}
\mbox{ }
\end{minipage}
\begin{minipage}{0.45\linewidth}
\includegraphics[width=\linewidth]{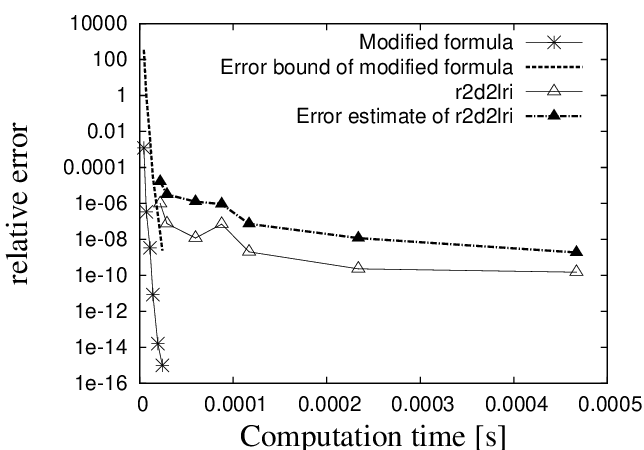}
\caption{Relative error with respect to computation time in Example~\ref{exam:weak-singular}.}
\label{fig:weak-singular-t}
\end{minipage}
\end{center}
\end{figure}

%#!latex ErrorEstimRepeatInt

\section{Proofs}
\label{sec:proofs}

In this section,
only the inequality~\eqref{leq:main-result-error-bound}
(for $|I - I_{\textDE}^{\text{\rm{inc}}}(h)|$)
is proved, since
%the inequality~\eqref{leq:main-result-error-bound-2}
$|I - I_{\textDE}^{\text{\rm{dec}}}(h)|$
is bounded in exactly the same way.
Let us have a look at the sketch of the proof first.

\subsection{Sketch of the proof}

The error $|I - I_{\textDE}^{\text{\rm{inc}}}(h)|$
can be bounded by a sum of two terms as follows:
\begin{align*}
|I - I_{\textDE}^{\text{\rm{inc}}}(h)|
&\leq
\left|
\int_a^b F(x)\diff x
-\tilde{h}\sum_{i=-M_{-}}^{M_{+}} F(\DEt(i\tilde{h}))\DEtDiv(i\tilde{h})
\right|\\
&\quad +
\tilde{h}\sum_{i=-M_{-}}^{M_{+}}\DEtDiv(i\tilde{h})
\left|
\int_a^{\DEt(i\tilde{h})} f_i(s)\diff s
-
\sum_{j=-N_{-}}^{N_{+}} f_i(\DEt(jh))\DEtDiv(jh)J(j,h)(i\tilde{h})
\right|,
\end{align*}
where $F(x)=\int_a^x f(x,q(s))q'(s)\diff s$,
$f_i(s)=f(\DEt(i\tilde{h}), q(s))q'(s)$,
and $\tilde{h}=2h$.
The first term (say $E_1$) and the second term (say $E_2$)
are bounded as follows:
\begin{align}
E_1
&\leq
\frac{\Bfunc(\gamma,\delta)c_{\gamma,\delta,d}}{\mu}
\left\{
\rme^{\frac{\pi}{2}\overline{\mu}}
+\frac{2c_{\alpha,\beta,d}}{1-\rme^{-2\pi d/\tilde{h}}}
\right\}
2K (b-a)^{\alpha+\beta+\gamma+\delta-2} \rme^{-2\pi d/\tilde{h}},
\label{leq:bound-E1}\\
E_2
&\leq
\frac{1}{\nu}
\left\{\Bfunc(\alpha,\beta)+\frac{4 c_{\alpha,\beta,d}}{\mu}
\frac{\rme^{-2\pi d/\tilde{h}}}{1-\rme^{-2\pi d/\tilde{h}}}
\right\}
\left\{
1.1\rme^{\frac{\pi}{2}\overline{\nu}}
+\frac{hc_{\gamma,\delta,d}}{d(1-\rme^{-2\pi d/h})}
\right\}
2K (b-a)^{\alpha+\beta+\gamma+\delta-2} \rme^{-\pi d/h}.
\label{leq:bound-E2}
\end{align}
Then, taking $\tilde{h}=2h$, we get the
desired inequality~\eqref{leq:main-result-error-bound}.
In what follows,
the inequalities~\eqref{leq:bound-E1}
and~\eqref{leq:bound-E2} are shown
in Sections~\ref{subsec:bound-E1}
and~\ref{subsec:bound-E2}, respectively.

\subsection{Bound of $E_1$ (error of the DE-Sinc quadrature)}
\label{subsec:bound-E1}

The following two lemmas are important results for this project.

\begin{lem}[Okayama et al.~{\cite[Lemma~4.16]{okayama09:_error}}]
\label{lem:DE-quad-discrete-error}
Let $\tilde{L}$, $\alpha$, and $\beta$ be positive constants,
and let $\mu=\min\{\alpha,\,\beta\}$.
Let $F$ be analytic on $\DEt(\domD_d)$ for $d$ with $0<d<\pi/2$, and satisfy
\begin{equation*}
|F(z)|\leq \tilde{L} |z-a|^{\alpha-1}|b-z|^{\beta-1}
\end{equation*}
for all $z\in\DEt(\domD_d)$.
Then it holds that
\[
\left|
 \int_a^b F(x)\diff x
- \tilde{h}\sum_{i=-\infty}^{\infty}F(\DEt(i\tilde{h}))\DEtDiv(i\tilde{h})
\right|
\leq \tilde{C}_1 \tilde{C}_2 \frac{\rme^{-2\pi d/\tilde{h}}}{1 - \rme^{-2\pi d/\tilde{h}}},
\]
where the constants $\tilde{C}_1$ and $\tilde{C}_2$ are defined by
\begin{align}
\tilde{C}_1=\frac{2\tilde{L}(b-a)^{\alpha+\beta-1}}{\mu},
\quad \tilde{C}_2=2 c_{\alpha,\beta,d}.
%\frac{2}{\cos^{\alpha+\beta}(\frac{\pi}{2}\sin d)\cos d}.
\label{eq:def-tilde-C1-C2}
\end{align}
\end{lem}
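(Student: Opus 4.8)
The plan is to reduce the statement to the classical error bound for the infinite trapezoidal rule on $\Rset$, applied after the DE change of variables, and then to control the resulting Hardy-space norm by means of the growth hypothesis on $F$.

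First I would substitute $x=\DEt(\zeta)$ and put $\mathcal{F}(\zeta)=F(\DEt(\zeta))\DEtDiv(\zeta)$, so that $\int_a^b F(x)\diff x=\int_{-\infty}^{\infty}\mathcal{F}(\xi)\diff\xi$ and the sum on the left-hand side of the asserted inequality is $\tilde{h}\sum_{i=-\infty}^{\infty}\mathcal{F}(i\tilde{h})$. Since $F$ is analytic on $\DEt(\domD_d)$ and $\DEt$ maps $\domD_d$ conformally into $\DEt(\domD_d)$, the function $\mathcal{F}$ is analytic on $\domD_d$. The classical trapezoidal-rule estimate~\cite{stenger93:_numer} states that, for $\mathcal{F}\in\mathbf{H}^{1}(\domD_d)$,
\[
\left|\int_{-\infty}^{\infty}\mathcal{F}(\xi)\diff\xi-\tilde{h}\sum_{i=-\infty}^{\infty}\mathcal{F}(i\tilde{h})\right|\leq\frac{\rme^{-2\pi d/\tilde{h}}}{1-\rme^{-2\pi d/\tilde{h}}}\,\mathcal{N}_{1}(\mathcal{F}),\qquad\mathcal{N}_{1}(\mathcal{F})=\lim_{\eta\to d^{-}}\int_{-\infty}^{\infty}\bigl(|\mathcal{F}(\xi+\imnum\eta)|+|\mathcal{F}(\xi-\imnum\eta)|\bigr)\diff\xi ,
\]
so it suffices to verify $\mathcal{F}\in\mathbf{H}^{1}(\domD_d)$ and to show $\mathcal{N}_{1}(\mathcal{F})\leq\tilde{C}_1\tilde{C}_2$.

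Next I would use the elementary identities $\DEt(\zeta)-a=(b-a)/(1+\rme^{-\pi\sinh\zeta})$, $b-\DEt(\zeta)=(b-a)/(1+\rme^{\pi\sinh\zeta})$ and $\DEtDiv(\zeta)=\{\pi\cosh\zeta/(b-a)\}\,(\DEt(\zeta)-a)(b-\DEt(\zeta))$, together with the hypothesis $|F(z)|\leq\tilde{L}|z-a|^{\alpha-1}|b-z|^{\beta-1}$, to obtain
\[
|\mathcal{F}(\zeta)|\leq\pi\tilde{L}(b-a)^{\alpha+\beta-1}\,\frac{|\cosh\zeta|}{|1+\rme^{-\pi\sinh\zeta}|^{\alpha}\,|1+\rme^{\pi\sinh\zeta}|^{\beta}} .
\]
The decisive input is the strip estimate that, for $\zeta\in\domD_d$ with $0<d<\pi/2$,
\[
\left|\cosh\!\bigl(\tfrac{\pi}{2}\sinh\zeta\bigr)\right|\geq\cos\!\bigl(\tfrac{\pi}{2}\sin d\bigr)\,\cosh\!\bigl(\tfrac{\pi}{2}\sinh(\Re\zeta)\cos(\Im\zeta)\bigr) ,
\]
which, after writing $\sinh\zeta=u+\imnum v$ and using the hyperbola relation $v^{2}=\sin^{2}(\Im\zeta)+u^{2}\tan^{2}(\Im\zeta)$, is equivalent to the scalar inequality $\cos^{2}(\tfrac{\pi}{2}v)+\sinh^{2}(\tfrac{\pi}{2}u)\sin^{2}(\tfrac{\pi}{2}\sin d)\geq\cos^{2}(\tfrac{\pi}{2}\sin d)$. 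Since $1+\rme^{-\pi\sinh\zeta}=2\rme^{-\tfrac{\pi}{2}\sinh\zeta}\cosh(\tfrac{\pi}{2}\sinh\zeta)$, the strip estimate gives $|1+\rme^{-\pi\sinh\zeta}|\geq\cos(\tfrac{\pi}{2}\sin d)(1+\rme^{-\pi u_{0}})$ and, likewise, $|1+\rme^{\pi\sinh\zeta}|\geq\cos(\tfrac{\pi}{2}\sin d)(1+\rme^{\pi u_{0}})$ with $u_{0}=\sinh(\Re\zeta)\cos(\Im\zeta)$; combined with $|\cosh\zeta|\leq\cosh(\Re\zeta)$ this yields, on the line $\Im\zeta=\eta$,
\[
|\mathcal{F}(\xi+\imnum\eta)|\leq\frac{\pi\tilde{L}(b-a)^{\alpha+\beta-1}}{\cos^{\alpha+\beta}(\tfrac{\pi}{2}\sin d)}\cdot\frac{\cosh\xi}{(1+\rme^{-\pi(\sinh\xi)\cos\eta})^{\alpha}(1+\rme^{\pi(\sinh\xi)\cos\eta})^{\beta}} .
\]
Substituting $t=(\sinh\xi)\cos\eta$ and splitting at $t=0$ (where the remaining factor is at most $\rme^{-\pi\beta t}$ for $t\geq0$ and at most $\rme^{\pi\alpha t}$ for $t\leq0$) bounds the $\xi$-integral by $2/(\pi\mu\cos\eta)$, hence $\int_{-\infty}^{\infty}|\mathcal{F}(\xi\pm\imnum\eta)|\diff\xi\leq 2\tilde{L}(b-a)^{\alpha+\beta-1}/(\mu\cos^{\alpha+\beta}(\tfrac{\pi}{2}\sin d)\cos\eta)$. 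The same pointwise bound shows $|\mathcal{F}|$ decays doubly exponentially along horizontal lines and that this integral is uniformly bounded for $\eta<d$, so $\mathcal{F}\in\mathbf{H}^{1}(\domD_d)$; letting $\eta\to d^{-}$ and adding the two lines gives $\mathcal{N}_{1}(\mathcal{F})\leq 4\tilde{L}(b-a)^{\alpha+\beta-1}c_{\alpha,\beta,d}/\mu=\tilde{C}_1\tilde{C}_2$, which would finish the argument.

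The main obstacle is the scalar inequality $\cos^{2}(\tfrac{\pi}{2}v)+\sinh^{2}(\tfrac{\pi}{2}u)\sin^{2}(\tfrac{\pi}{2}\sin d)\geq\cos^{2}(\tfrac{\pi}{2}\sin d)$ under the hyperbola constraint linking $u$ and $v$: everything else is bookkeeping with the explicit formulas for $\DEt$ and $\DEtDiv$. I expect to establish it by a case split on the size of $|u|$ --- when $\sinh^{2}(\tfrac{\pi}{2}u)\geq\cot^{2}(\tfrac{\pi}{2}\sin d)$ the second term alone suffices, and otherwise the constraint keeps $|v|$ in a range on which $\cos^{2}(\tfrac{\pi}{2}v)$ stays bounded below --- but the oscillation of $\cos^{2}(\tfrac{\pi}{2}v)$ means this step needs genuine care rather than a one-line estimate. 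A related point is the verification that the hypotheses of the classical trapezoidal estimate are met, i.e.\ $\mathcal{F}\in\mathbf{H}^{1}(\domD_d)$, for which the doubly-exponential decay of $\DEtDiv$ is the key.
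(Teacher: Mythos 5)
Your argument is correct and is essentially the standard proof of this result; note that the paper itself does not prove this lemma at all but imports it verbatim from Okayama et al.\ (Lemma~4.16 of the cited reference), and your reconstruction follows the same route as that source: transplant to the strip via $x=\DEt(\zeta)$, invoke the $\mathbf{H}^1(\domD_d)$ trapezoidal error bound $\rme^{-2\pi d/\tilde h}/(1-\rme^{-2\pi d/\tilde h})$, and control the boundary $L^1$ norm using the identities $\DEt(\zeta)-a=(b-a)/(1+\rme^{-\pi\sinh\zeta})$, $b-\DEt(\zeta)=(b-a)/(1+\rme^{\pi\sinh\zeta})$. Your constants check out exactly: the substitution $t=(\sinh\xi)\cos\eta$ gives the factor $2/(\pi\mu\cos\eta)$, and summing the two boundary lines as $\eta\to d^-$ yields $4\tilde L(b-a)^{\alpha+\beta-1}c_{\alpha,\beta,d}/\mu=\tilde C_1\tilde C_2$. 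The one step you flag as delicate --- the lower bound $|1+\rme^{\pm\pi\sinh\zeta}|\geq\cos(\tfrac{\pi}{2}\sin y)\,(1+\rme^{\pm\pi\sinh(\Re\zeta)\cos(\Im\zeta)})$, equivalently your scalar inequality $\cos^2(\tfrac{\pi}{2}v)+\sinh^2(\tfrac{\pi}{2}u)\sin^2(\tfrac{\pi}{2}\sin y)\geq\cos^2(\tfrac{\pi}{2}\sin y)$ on the hyperbola $v^2=\sin^2y+u^2\tan^2y$ --- is precisely Lemma~\ref{Lem:Bound-DEt-essential} already quoted in this paper (Okayama et al., Lemma~4.22), so you may cite it rather than re-derive it; with that, nothing in your outline is missing.
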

\begin{lem}[Okayama et al.~{\cite[Lemma~4.18]{okayama09:_error}}]
\label{lem:DE-quad-truncate-error}
Let the assumptions in Lemma~\ref{lem:DE-quad-discrete-error}
be fulfilled.
Furthermore, let $\overline{\mu}=\max\{\alpha,\,\beta\}$,
let $m$ be a positive integer,
let $M_{-}$ and $M_{+}$ be positive integers defined
by~\eqref{DE-Sinc-func-approx-M-def},
%\[
%\begin{cases}
%M_{-}=m,\quad M_{+}=m-\lfloor\log(\beta/\alpha)/\tilde{h}\rfloor
% & \quad (\text{if}\,\,\,\mu = \alpha),\\
%M_{+}=m,\quad M_{-}=m-\lfloor\log(\alpha/\beta)/\tilde{h}\rfloor
% & \quad (\text{if}\,\,\,\mu = \beta),
%\end{cases}
%%\label{DE-Sinc-func-approx-M-def}
%\]
%
and let $m$ be taken sufficiently large so that
$M_{-}\tilde{h}\geq \rho_{\alpha}$ and $M_{+}\tilde{h}\geq \rho_{\beta}$ hold.
Then it holds that
\[
\left|
  \tilde{h}\sum_{i=-\infty}^{-(M_{-}+1)}F(\DEt(i\tilde{h}))\DEtDiv(i\tilde{h})
+ \tilde{h}\sum_{i=M_{+}+1}^{\infty}F(\DEt(i\tilde{h}))\DEtDiv(i\tilde{h})
\right|
\leq \rme^{\frac{\pi}{2}\overline{\mu}}
\tilde{C}_1 \rme^{-\frac{\pi}{2}\mu\exp(m\tilde{h})},
\]
where $\tilde{C}_1$ is a constant defined in~\eqref{eq:def-tilde-C1-C2}.
\end{lem}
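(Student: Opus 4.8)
The plan is to prove the bilateral truncation estimate by treating the two tails separately, reducing each to a one-dimensional sum of a double-exponentially decaying sequence, comparing that sum with a closed-form integral, and finally balancing the two contributions. First I would apply the triangle inequality to bound the modulus of the sum of the two tails by $T_{-}+T_{+}$, where $T_{-}:=\tilde{h}\sum_{i=-\infty}^{-(M_{-}+1)}|F(\DEt(i\tilde{h}))|\DEtDiv(i\tilde{h})$ and $T_{+}:=\tilde{h}\sum_{i=M_{+}+1}^{\infty}|F(\DEt(i\tilde{h}))|\DEtDiv(i\tilde{h})$; this is legitimate because $\DEtDiv>0$ on $\mathbb{R}$.

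Next is the pointwise bound. On the real axis the DE map satisfies $\DEt(\xi)-a=(b-a)u/(1+u)$, $b-\DEt(\xi)=(b-a)/(1+u)$, and $\DEtDiv(\xi)=\pi(b-a)\cosh\xi\,u/(1+u)^{2}$, where $u=\rme^{\pi\sinh\xi}$. Inserting these together with the hypothesis $|F(z)|\le\tilde{L}|z-a|^{\alpha-1}|b-z|^{\beta-1}$ yields
\[
|F(\DEt(\xi))|\DEtDiv(\xi)\le\tilde{L}(b-a)^{\alpha+\beta-1}\pi\cosh\xi\,\frac{u^{\alpha}}{(1+u)^{\alpha+\beta}} .
\]
For the right tail ($\xi>0$, $u\ge 1$) I would bound $(1+u)^{-\alpha}$ by $u^{-\alpha}$, leaving the majorant $\pi\cosh\xi\,\rme^{-\pi\beta\sinh\xi}$; the reflection $\xi\mapsto-\xi$ turns the left tail into the same shape with $\beta$ replaced by $\alpha$. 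Thus each $T_{\pm}$ is reduced to a sum of the form $\tilde{h}\sum_{i=M+1}^{\infty}\pi\cosh(i\tilde{h})\rme^{-\pi\kappa\sinh(i\tilde{h})}$ with $\kappa\in\{\alpha,\beta\}$.

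Then the key one-sided estimate is to bound such a sum by its tail integral $\int_{M\tilde{h}}^{\infty}\pi\cosh\xi\,\rme^{-\pi\kappa\sinh\xi}\diff\xi=\frac{1}{\kappa}\rme^{-\pi\kappa\sinh(M\tilde{h})}$. This is precisely where the hypotheses $M_{-}\tilde{h}\ge\rho_{\alpha}$ and $M_{+}\tilde{h}\ge\rho_{\beta}$ enter: $\rho_{\kappa}$ is calibrated so that beyond the truncation index the summand admits a decreasing majorant with this integral, licensing the sum-to-integral passage. Applying this with $(\kappa,M)=(\alpha,M_{-})$ to $T_{-}$ and $(\kappa,M)=(\beta,M_{+})$ to $T_{+}$, then invoking the asymmetric definitions \eqref{DE-Sinc-func-approx-M-def}, I would use $\sinh\xi\ge\frac{1}{2}\rme^{\xi}-\frac{1}{2}$ (valid for $\xi\ge 0$) and, in the case $\mu=\alpha$, the inequality $M_{+}\tilde{h}\ge m\tilde{h}-\log(\beta/\alpha)$, i.e.\ $\rme^{M_{+}\tilde{h}}\ge(\alpha/\beta)\rme^{m\tilde{h}}$. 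Both tails then collapse to the common rate $\rme^{-\frac{\pi}{2}\mu\exp(m\tilde{h})}$, each with prefactor at most $\frac{1}{\mu}\rme^{\frac{\pi}{2}\overline{\mu}}$; adding them gives $\frac{2}{\mu}\rme^{\frac{\pi}{2}\overline{\mu}}$, and multiplying by $\tilde{L}(b-a)^{\alpha+\beta-1}$ reproduces exactly $\rme^{\frac{\pi}{2}\overline{\mu}}\tilde{C}_{1}$. The case $\mu=\beta$ is symmetric.

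The main obstacle is making the sum-to-integral comparison rigorous under the \emph{weak} threshold $M\tilde{h}\ge\rho_{\kappa}$: the naive requirement that $\cosh\xi\,\rme^{-\pi\kappa\sinh\xi}$ be monotone decreasing would force a strictly larger truncation point, so the passage must exploit the specific calibration of $\rho_{\kappa}$ rather than bare monotonicity. The balancing of the two tails to a single exponential rate via the asymmetric truncation \eqref{DE-Sinc-func-approx-M-def} is the other delicate point, though once the one-sided estimate is established it is essentially bookkeeping.
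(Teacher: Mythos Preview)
The paper does not supply its own proof of this lemma: it is quoted verbatim as Okayama et al.~\cite[Lemma~4.18]{okayama09:_error}, so there is nothing in the present paper to compare your argument against. Your outline is the standard one and matches what one expects the cited proof to do---split into two tails, use the real-axis identities for $\DEt$ and $\DEtDiv$ to reduce each tail to $\tilde{h}\sum_{i>M}\pi\cosh(i\tilde{h})\rme^{-\pi\kappa\sinh(i\tilde{h})}$, replace the sum by $\int_{M\tilde{h}}^{\infty}\pi\cosh\xi\,\rme^{-\pi\kappa\sinh\xi}\diff\xi=\kappa^{-1}\rme^{-\pi\kappa\sinh(M\tilde{h})}$, and then balance via~\eqref{DE-Sinc-func-approx-M-def} and $\sinh\xi\geq\tfrac{1}{2}\rme^{\xi}-\tfrac{1}{2}$. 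Your final bookkeeping to reach $\rme^{\frac{\pi}{2}\overline{\mu}}\tilde{C}_{1}\rme^{-\frac{\pi}{2}\mu\exp(m\tilde{h})}$ is correct.

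The one place where your proposal is a plan rather than a proof is exactly the step you flag: the sum-to-integral comparison under the hypothesis $M\tilde{h}\geq\rho_{\kappa}$. You correctly observe that bare monotonicity of $\xi\mapsto\cosh\xi\,\rme^{-\pi\kappa\sinh\xi}$ would demand a larger threshold (the larger root of $\pi\kappa s^{2}-s+\pi\kappa=0$ in $s=\sinh\xi$), whereas the paper's $\rho_{\kappa}$ is strictly smaller, and for $\kappa\geq 1/(2\pi)$ is set to $\arcsinh(1)$ even though the majorant is already globally decreasing. So the calibration of $\rho_{\kappa}$ is tied to a specific auxiliary inequality in the cited reference, not to monotonicity of your majorant; you would need to consult \cite{okayama09:_error} to see precisely which pointwise bound licenses the passage. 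Until that step is filled in, the argument is incomplete, though the overall architecture is right.
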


What should be checked here is whether
the conditions of those two lemmas are satisfied under the
assumptions in Theorem~\ref{thm:main-result}.
The next lemma answers to this question.

\begin{lem}
\label{lem:essential-bound-DE-Sinc}
Let the assumptions in Theorem~\ref{thm:main-result}
be fulfilled,
%and let us define a function $F(z)$ as
and let $F$ be defined as
$F(z) = \int_a^{z} f(z, q(w))q'(w)\diff w$.
Then,
%the function $F$
%satisfies
the assumptions of Lemmas~\ref{lem:DE-quad-discrete-error}
and~\ref{lem:DE-quad-truncate-error}
are satisfied
with
%\[
$\tilde{L}=K(b-a)^{\gamma+\delta-1}\Bfunc(\gamma,\delta) c_{\gamma,\delta,d}$.
%\]
%
% $q$ be analytic in $\DEtOne(\domD_d)$, and
%$q(z)\in\DEtTwo(\domD_d)$ for all $z\in\DEtOne(\domD_d)$.
%Let us define a function $F(z)$ as
%\begin{equation*}
%F(z) = \int_A^{q(z)} f(z, w)\diff w.
%\end{equation*}
%Then it holds for $z\in\DEt(\domD_d)$ that
%\begin{equation*}
%\left|\int_a^{z} f(z, q(w))q'(w)\diff w\right|\leq
%%|F(z)|\leq
% \frac{K(b-a)^{\gamma+\delta-1}\Bfunc(\gamma,\delta)}
%      {\cos^{\gamma+\delta}(\frac{\pi}{2}\sin d)\cos d}
%|z-a|^{\alpha-1}|b-z|^{\beta-1},
%\end{equation*}
%where $\Bfunc(\alpha,\beta)$ is the beta function.
\end{lem}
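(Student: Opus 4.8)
The goal is to verify that $F(z)=\int_a^z f(z,q(w))q'(w)\diff w$ satisfies the hypotheses of Lemmas~\ref{lem:DE-quad-discrete-error} and~\ref{lem:DE-quad-truncate-error}, namely that $F$ is analytic on $\DEt(\domD_d)$ and that
\[
|F(z)|\leq \tilde{L}\,|z-a|^{\alpha-1}|b-z|^{\beta-1}
\]
for all $z\in\DEt(\domD_d)$, with the stated constant $\tilde{L}=K(b-a)^{\gamma+\delta-1}\Bfunc(\gamma,\delta)\,c_{\gamma,\delta,d}$. The analyticity of $F$ is the routine part: $\DEt(\domD_d)$ is a simply connected domain, the integrand $w\mapsto f(z,q(w))q'(w)$ is analytic there for each fixed $z$ by assumptions~2 and~1 of Theorem~\ref{thm:main-result} (together with analyticity of $q$, hence of $q'$), and $f(\cdot,q(w))$ is analytic for each fixed $w$ by assumption~2; so, after choosing a convenient path of integration inside the domain, $F$ is analytic by differentiation under the integral sign (Morera plus Fubini, or simply the fact that a contour integral of a jointly analytic integrand depends analytically on the upper limit and the parameter). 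The bound~\eqref{leq:bound-f-q} also guarantees the integral converges near the endpoint $w=a$ since $\gamma>0$.

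The substantive part is the magnitude estimate. First I would reduce the line-segment-path estimate to the real interval: for $z\in\DEt(\domD_d)$ I will integrate along a path from $a$ to $z$ and apply~\eqref{leq:bound-f-q} to get
\[
|F(z)|\leq K|z-a|^{\alpha-1}|b-z|^{\beta-1}\int_{\text{path}}|w-a|^{\gamma-1}|b-w|^{\delta-1}\,|\diff w|,
\]
after pulling the $z$-dependent factors $|z-a|^{\alpha-1}|b-z|^{\beta-1}$ out of the $w$-integral (they do not depend on $w$). What remains is to bound the $w$-integral of $|w-a|^{\gamma-1}|b-w|^{\delta-1}$ over a suitable path in $\DEt(\domD_d)$ by the constant $(b-a)^{\gamma+\delta-1}\Bfunc(\gamma,\delta)\,c_{\gamma,\delta,d}$. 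The natural choice is the image under $\DEt$ of the horizontal segment in $\domD_d$ joining $\DEtInv(a)=-\infty$-side appropriately to $\DEtInv(z)$; equivalently, parametrize $w=\DEt(\eta)$ with $\eta$ running along a horizontal line $\Im\eta=\text{const}\in(-d,d)$. Using the explicit form $\DEtDiv(\eta)=\frac{b-a}{2}\cdot\frac{\pi}{2}\cosh\eta\,\sech^2(\frac{\pi}{2}\sinh\eta)$ and the elementary bounds $|\DEt(\eta)-a|\geq \text{(something)}\cdot$ and $|b-\DEt(\eta)|\geq$ etc. on such a line, together with the factor $1/(\cos^{\gamma+\delta}(\frac{\pi}{2}\sin d)\cos d)$ that exactly accounts for the worst-case real-to-complex distortion on $|\Im\eta|=d$, the path integral is dominated by the corresponding integral over the real line, which is $\int_a^b (w-a)^{\gamma-1}(b-w)^{\delta-1}\diff w=(b-a)^{\gamma+\delta-1}\Bfunc(\gamma,\delta)$ after the affine change of variables. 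Multiplying by $c_{\gamma,\delta,d}$ gives the claimed $\tilde{L}$.

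The main obstacle I anticipate is the careful complex-analytic estimate bounding $|\DEt(\eta)-a|$ and $|b-\DEt(\eta)|$ from below (and the arc-length element $|\DEtDiv(\eta)|$ from above) uniformly over the strip $|\Im\eta|<d$, in such a way that the constant $c_{\gamma,\delta,d}=1/(\cos^{\gamma+\delta}(\frac{\pi}{2}\sin d)\cos d)$ comes out exactly — this is the standard but delicate ``DE transformation on a strip'' computation, and getting the powers of $\cos(\frac{\pi}{2}\sin d)$ to match $\gamma+\delta$ (rather than, say, $\max\{\gamma,\delta\}$ or $\gamma+\delta-1$) requires splitting the path at $\Re\eta=0$ and estimating the two halves separately, using that on $\Re\eta\geq 0$ the factor $|b-\DEt(\eta)|$ behaves like $(b-a)\sech(\ldots)/(\text{bounded})$ while $|\DEt(\eta)-a|$ stays comparable to $b-a$, and symmetrically for $\Re\eta\leq 0$. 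I expect the paper to invoke a previously established transformation lemma (analogous to results in~\cite{okayama09:_error}) to package this estimate, so that the proof of Lemma~\ref{lem:essential-bound-DE-Sinc} itself reduces to checking analyticity and assembling the bound from that lemma.
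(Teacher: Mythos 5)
Your proposal follows essentially the same route as the paper: write $z=\DEt(\xi+\imnum y)$, integrate along the image under $\DEt$ of the horizontal line $\Im\eta=y$ up to $\DEtInv(z)$, pull the $z$-dependent factors out via~\eqref{leq:bound-f-q}, and bound the remaining path integral by $(b-a)^{\gamma+\delta-1}\Bfunc(\gamma,\delta)\,c_{\gamma,\delta,d}$ using a packaged strip-distortion estimate, exactly as the paper does through Lemmas~\ref{Lem:Bound-DEt-essential} and~\ref{lem:DEt-Complex-Betafunc-Bound}. The only difference is in the unexecuted detail you flag as the obstacle: no splitting at $\Re\eta=0$ is needed, since the pointwise bounds $|1+\rme^{\pm\pi\sinh(x+\imnum y)}|^{-1}\leq \bigl((1+\rme^{\pm\pi\sinh(x)\cos y})\cos(\tfrac{\pi}{2}\sin y)\bigr)^{-1}$ together with $|\cosh(x+\imnum y)|\leq\cosh x$ turn the integrand into an exact derivative of the incomplete beta function $\Bfunc(\DEtZero(\xi,y);\gamma,\delta)$, which is then majorized by $\Bfunc(\gamma,\delta)$.
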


If this lemma is proved, combining
Lemmas~\ref{lem:DE-quad-discrete-error}
and~\ref{lem:DE-quad-truncate-error},
and using the
relations~\eqref{eq:m-and-n-def}--\eqref{DE-Sinc-func-approx-N-def},
we get the desired inequality~\eqref{leq:bound-E1}.
For the proof of Lemma~\ref{lem:essential-bound-DE-Sinc},
%To this end,
we need the following inequalities.

\begin{lem}[Okayama et al.~{\cite[Lemma~4.22]{okayama09:_error}}]
\label{Lem:Bound-DEt-essential}
Let $x$ and $y$ be real numbers with $|y|<\pi/2$. Then we have
%and let $\zeta=x+\imnum y$. Then
\begin{align*}
%\label{phi-div-denominator-plus}
     \left|
      \frac{1}{1+\rme^{\pi\sinh(x+\imnum y)}}
     \right|
&\leq\frac{1}{(1+\rme^{\pi\sinh(x)\cos y})\cos(\frac{\pi}{2}\sin y)},
\\
%\nonumber \\
     \left|
      \frac{1}{1+\rme^{-\pi\sinh (x+\imnum y)}}
     \right|
&\leq\frac{1}{(1+\rme^{-\pi\sinh(x)\cos y})\cos(\frac{\pi}{2}\sin y)}.
%\nonumber
\end{align*}
\end{lem}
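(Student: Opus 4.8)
The plan is to establish both inequalities together. First I would dispose of the second one by the reflection $x\mapsto -x$. Since $\overline{\sinh(x+\imnum y)}=\sinh(x-\imnum y)=-\sinh(-x+\imnum y)$, taking complex conjugates gives
\[
\left|\frac{1}{1+\rme^{-\pi\sinh(x+\imnum y)}}\right|
=\left|\frac{1}{1+\rme^{\pi\sinh(-x+\imnum y)}}\right|,
\]
while $\pi\sinh(-x)\cos y=-\pi\sinh(x)\cos y$ and $\cos(\frac{\pi}{2}\sin y)$ is unchanged. Hence the second inequality is exactly the first one evaluated at $(-x,y)$, and it suffices to treat $1/(1+\rme^{\pi\sinh(x+\imnum y)})$.

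For the first inequality I would write $\pi\sinh(x+\imnum y)=u+\imnum v$ with $u=\pi\sinh x\cos y$ and $v=\pi\cosh x\sin y$. A direct computation yields $|1+\rme^{u+\imnum v}|^2=1+2\rme^{u}\cos v+\rme^{2u}$, and using $1-\cos v=2\sin^2(v/2)$ together with $(1+\rme^{u})^2=4\rme^{u}\cosh^2(u/2)$ this becomes
\[
\left|1+\rme^{\pi\sinh(x+\imnum y)}\right|^2
=(1+\rme^{u})^2\left\{1-\frac{\sin^2(v/2)}{\cosh^2(u/2)}\right\}.
\]
Because $|y|<\pi/2$ forces $|w|<\pi/2$ for $w:=\frac{\pi}{2}\sin y$, so that $\cos w>0$, the asserted bound is, after squaring and clearing the (positive) denominators, \emph{equivalent} to the real inequality
\[
(\star)\qquad
\left|\sin\!\left(\tfrac{\pi}{2}\sin y\,\cosh x\right)\right|
\le |\sin w|\;\cosh\!\left(\tfrac{\pi}{2}\cos y\,\sinh x\right),
\]
since $v/2=\frac{\pi}{2}\sin y\,\cosh x$ and $u/2=\frac{\pi}{2}\cos y\,\sinh x$. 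Thus the whole lemma collapses to $(\star)$, which I expect to be the main obstacle; note that $(\star)$ genuinely uses the coupling between $u$ and $v$ and is \emph{not} implied by the generic bound $|1+\rme^{a+\imnum b}|\ge(1+\rme^{a})|\cos(b/2)|$.

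To prove $(\star)$ I would use the evenness of both sides in $x$ and the symmetry $y\mapsto -y$ to assume $x\ge 0$ and $0\le y<\pi/2$, and abbreviate $w=\frac{\pi}{2}\sin y\in[0,\pi/2)$ and $\omega=\frac{\pi}{2}\cos y$, so that $w^2+\omega^2=\pi^2/4$. It is then enough to show that $x=0$ maximises
\[
R(x)=\frac{\sin^2(w\cosh x)}{\cosh^2(\omega\sinh x)},\qquad R(0)=\sin^2 w.
\]
I would split the range of $x$. On the set where $\cosh(\omega\sinh x)\ge 1/\sin w$ one has $R(x)\le 1/\cosh^2(\omega\sinh x)\le\sin^2 w$ at once, because $\sin^2(\cdot)\le 1$. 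The complementary set is a bounded interval $0\le x\le x_0$ on which $\cosh(\omega\sinh x)<1/\sin w$; here I would argue through $\psi(x)=\sin^2 w\,\cosh^2(\omega\sinh x)-\sin^2(w\cosh x)$, which satisfies $\psi(0)=\psi'(0)=0$ and $\psi''(0)=2\sin w\,(\omega^2\sin w-w\cos w)\ge 0$, the last inequality being the elementary fact that $(\pi^2/4-w^2)\tan w\ge w$ on $(0,\pi/2)$. This gives $\psi\ge 0$ near the origin, and on the remainder of $[0,x_0]$ the monotone growth of $\cosh(\omega\sinh x)$ against the at most linear growth $|\sin(w\cosh x)|\le\sin w+w(\cosh x-1)$ of the numerator closes the gap.

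The delicate regime is $y\uparrow\pi/2$ with $x\to 0$, where $(\star)$ is almost an equality—indeed $R$ touches $\sin^2 w$ to second order at $x=0$. There $x_0$ shrinks to $0$, the crude linear bound on $|\sin(w\cosh x)|$ becomes too lossy (it fails for $w$ near $\pi/2$), and the second-order comparison built on $(\pi^2/4-w^2)\tan w\ge w$ is what actually secures $\psi\ge 0$; this near-equality analysis is the heart of the argument. Once $(\star)$ is in hand, the algebraic reduction delivers the first inequality and the reflection argument delivers the second, completing the proof.
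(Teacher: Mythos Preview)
The paper does not supply its own proof of this lemma; it is quoted verbatim from Okayama et al.\ (their Lemma~4.22) and used as a black box. So there is no in-paper argument to compare against, and your attempt must stand on its own.

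Your reduction is clean and correct: the reflection $x\mapsto -x$ reduces the second inequality to the first, and the algebraic manipulation
\[
|1+\rme^{u+\imnum v}|^2=(1+\rme^{u})^2\Bigl\{1-\frac{\sin^2(v/2)}{\cosh^2(u/2)}\Bigr\}
\]
shows that the first inequality is equivalent to your key inequality $(\star)$, namely $|\sin(w\cosh x)|\le\sin w\,\cosh(\omega\sinh x)$ with $w=\tfrac{\pi}{2}\sin y$, $\omega=\tfrac{\pi}{2}\cos y$. The large-$x$ case ($\cosh(\omega\sinh x)\ge 1/\sin w$) is indeed trivial, and your computation $\psi(0)=\psi'(0)=0$, $\psi''(0)=2\sin w(\omega^2\sin w-w\cos w)\ge 0$, together with the concavity argument for $(\pi^2/4-w^2)\tan w\ge w$, is all correct.

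The genuine gap is in covering the bounded interval $[0,x_0]$. You propose two devices: the Taylor information $\psi''(0)>0$ near $x=0$, and the linear bound $|\sin(w\cosh x)|\le\sin w+w(\cosh x-1)$ on ``the remainder''. But the linear bound, to leading order in $x$, requires $w\le\omega^{2}\sin w$, which \emph{fails for every $w$ with $\omega^{2}=\pi^2/4-w^2<w/\sin w$}; in particular it fails on an entire neighbourhood of $x=0$ once $w$ is near $\pi/2$. On the other side, $\psi''(0)>0$ only yields $\psi\ge 0$ on an unspecified small neighbourhood whose size depends on the higher derivatives of $\psi$, and as $w\uparrow\pi/2$ one has $\psi''(0)\to 0$ while $x_0$ also shrinks, so it is not at all clear that the Taylor neighbourhood reaches the point where the linear bound starts to work. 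You never exhibit an $x_1\in(0,x_0)$ with $\psi\ge 0$ on $[0,x_1]$ \emph{and} the linear bound valid on $[x_1,x_0]$; the two regimes may well leave an uncovered middle zone. Your closing paragraph essentially concedes this (``the crude linear bound \ldots\ fails for $w$ near $\pi/2$'') but then asserts that the second-order comparison ``secures $\psi\ge 0$'' without justification. To close the argument you need either a uniform lower bound on $\psi''$ (not just $\psi''(0)$) over $[0,x_0]$, or a sharper replacement for the linear bound that is valid down to $x=0$.
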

\begin{lem}
\label{lem:DEt-Complex-Betafunc-Bound}
Let $x,\,\xi,\,y\in\mathbb{R}$ with $|y|<\pi/2$,
let $\gamma$ and $\delta$ be positive constants,
and let us define a function $\DEtZero(x,y)$ as
\begin{equation*}
 \DEtZero(x,y) = \frac{1}{2}\tanh\left(\frac{\pi\cos y}{2}\sinh x\right)
+\frac{1}{2}.
\end{equation*}
Then it holds that
\begin{equation*}
\int_{-\infty}^{\xi}
\frac{\pi|\cosh(x+\imnum y)|\diff x}
     {|1+\rme^{-\pi\sinh(x+\imnum y)}|^{\gamma}|1+\rme^{\pi\sinh(x+\imnum y)}|^{\delta}}
\leq \frac{\Bfunc(\DEtZero(\xi,y);\gamma,\delta)}
          {\cos^{\gamma+\delta}(\frac{\pi}{2}\sin y)\cos y},
\end{equation*}
where $\Bfunc(t; \kappa, \lambda)$ is the incomplete beta function.
\end{lem}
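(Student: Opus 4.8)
The plan is to bound the complex line integral pointwise by a real integrand using Lemma~\ref{Lem:Bound-DEt-essential}, and then to identify the resulting real integral as an incomplete beta integral through the explicit change of variables $t=\DEtZero(x,y)$.

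First I would apply Lemma~\ref{Lem:Bound-DEt-essential} to the two reciprocal factors in the denominator (the factor carrying the exponent $\gamma$ being $1/(1+\rme^{-\pi\sinh(x+\imnum y)})$ and the one carrying $\delta$ being $1/(1+\rme^{\pi\sinh(x+\imnum y)})$), which yields
\[
\frac{1}{|1+\rme^{-\pi\sinh(x+\imnum y)}|^{\gamma}|1+\rme^{\pi\sinh(x+\imnum y)}|^{\delta}}
\leq
\frac{1}{\cos^{\gamma+\delta}(\tfrac{\pi}{2}\sin y)}\,
\frac{1}{(1+\rme^{-u})^{\gamma}(1+\rme^{u})^{\delta}},
\qquad u:=\pi\cos y\,\sinh x .
\]
Next, using $|\cosh(x+\imnum y)|^{2}=\cosh^{2}x-\sin^{2}y\leq\cosh^{2}x$, I would replace $|\cosh(x+\imnum y)|$ by $\cosh x$, so that the integrand is dominated by $\cos^{-(\gamma+\delta)}(\tfrac{\pi}{2}\sin y)$ times the real quantity $\pi\cosh x\,(1+\rme^{-u})^{-\gamma}(1+\rme^{u})^{-\delta}$.

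The central step is the substitution $t=\DEtZero(x,y)$. From the identity $\tfrac12\tanh(v/2)+\tfrac12=1/(1+\rme^{-v})$ it follows that $\DEtZero(x,y)=1/(1+\rme^{-u})$ and hence $1-\DEtZero(x,y)=1/(1+\rme^{u})$, so $(1+\rme^{-u})^{-\gamma}(1+\rme^{u})^{-\delta}=t^{\gamma}(1-t)^{\delta}$. Moreover $t(1-t)=\{(1+\rme^{-u})(1+\rme^{u})\}^{-1}=\tfrac14\sech^{2}(u/2)$, and differentiating $t=\DEtZero(x,y)$ gives $\divv t/\divv x=\tfrac{\pi\cos y}{4}\cosh x\,\sech^{2}(u/2)=\pi\cos y\,\cosh x\,t(1-t)$, i.e.\ $\pi\cosh x\diff x=\divv t/(\cos y\,t(1-t))$. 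Substituting, the real factor becomes $t^{\gamma-1}(1-t)^{\delta-1}\divv t/\cos y$. Since $\cos y>0$ (because $|y|<\pi/2$), the map $x\mapsto t$ is increasing with $t\to0$ as $x\to-\infty$ and $t=\DEtZero(\xi,y)$ at $x=\xi$; integrating therefore gives $\frac{1}{\cos y}\int_{0}^{\DEtZero(\xi,y)}t^{\gamma-1}(1-t)^{\delta-1}\divv t=\frac{1}{\cos y}\Bfunc(\DEtZero(\xi,y);\gamma,\delta)$, and multiplying by the constant $\cos^{-(\gamma+\delta)}(\tfrac{\pi}{2}\sin y)$ produces exactly the claimed bound. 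Convergence of the integral near $x=-\infty$ is ensured by $\gamma>0$.

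The only non-routine ingredient is recognizing this substitution: once one observes that $\DEtZero(x,y)$ is precisely $1/(1+\rme^{-\pi\cos y\sinh x})$ and that $\sech^{2}(u/2)=4t(1-t)$, the awkward product of hyperbolic factors (together with $\pi\cosh x\diff x$) collapses exactly to $t^{\gamma-1}(1-t)^{\delta-1}\divv t$ up to the constant $1/\cos y$, and the rest is bookkeeping. The points to watch are the positivity of $\cos y$, which makes the substitution monotone and fixes the lower limit at $0$, and the correct matching of the exponents $\gamma,\delta$ to the two factors when Lemma~\ref{Lem:Bound-DEt-essential} is invoked.
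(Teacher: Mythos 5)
Your proposal is correct and follows essentially the same route as the paper: bound the two denominator factors via Lemma~\ref{Lem:Bound-DEt-essential}, use $|\cosh(x+\imnum y)|\leq\cosh x$, and identify the remaining real integral as $\Bfunc(\DEtZero(\xi,y);\gamma,\delta)/\cos y$ via the substitution $t=\DEtZero(x,y)$. The only difference is that you write out explicitly the change of variables that the paper leaves implicit.
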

\begin{proof}
From Lemma~\ref{Lem:Bound-DEt-essential}
and $|\cosh(x+\imnum y)|\leq \cosh(x)$, we obtain
\begin{align*}
&\int_{-\infty}^{\xi}
\frac{\pi|\cosh(x+\imnum y)|\diff x}
     {|1+\rme^{-\pi\sinh(x+\imnum y)}|^{\gamma}|1+\rme^{\pi\sinh(x+\imnum y)}|^{\delta}}\\
&\leq
\frac{1}{\cos^{\gamma+\delta}(\frac{\pi}{2}\sin y)\cos y}
\int_{-\infty}^{\xi}
\frac{\pi\cosh(x)\cos (y)\diff x}
     {(1+\rme^{-\pi\sinh(x)\cos y})^{\gamma}(1+\rme^{\pi\sinh(x)\cos y})^{\delta}}
=\frac{\Bfunc(\DEtZero(\xi,y);\gamma,\delta)}
          {\cos^{\gamma+\delta}(\frac{\pi}{2}\sin y)\cos y}.
\end{align*}
%which completes the proof.
%just use $|\cosh(x+\imnum y)|\leq \cosh(x)$,
%Lemma~\ref{Lem:Bound-DEt-essential},
%and 
\vskip-\baselineskip\vskip-0.5\baselineskip \qed
\end{proof}

By using the estimates,
Lemma~\ref{lem:essential-bound-DE-Sinc}
is proved as follows.

\begin{proof}%[Lemma~\ref{lem:essential-bound-DE-Sinc}]
The estimate of the constant $\tilde{L}$ is essential.
Let $\xi=\Re[\DEtInv(z)]$ and $y=\Im[\DEtInv(z)]$,
i.e., $z=\DEt(\xi + \imnum y)$.
%Let us define a function $F(z)$ as
%$F(z) = \int_a^{z} f(z, q(w))q'(w)\diff w$.
By applying $w=\DEt(x + \imnum y)$, we have
\begin{align*}
|F(z)|
%\left|\int_A^{q(z)} f(z, w)\diff w\right|
&=\left|
\int_{-\infty}^{\xi}
f(z,q(\DEt(x+\imnum y)))q'(\DEt(x+\imnum y))\DEtDiv(x+\imnum y)\diff x
\right|\\
&\leq K|z-a|^{\alpha-1}|b-z|^{\beta-1}
\int_{-\infty}^{\xi} |\DEt(x+\imnum y)- a|^{\gamma-1}
|b-\DEt(x+\imnum y)|^{\delta-1}|\DEtDiv(x+\imnum y)|\diff x\\
&=K|z-a|^{\alpha-1}|b-z|^{\beta-1} (b-a)^{\gamma+\delta-1}
\int_{-\infty}^{\xi}
\frac{\pi|\cosh(x+\imnum y)|\diff x}
     {|1+\rme^{-\pi\sinh(x+\imnum y)}|^{\gamma}|1+\rme^{\pi\sinh(x+\imnum y)}|^{\delta}}.
\end{align*}
Then, the desired bound of $\tilde{L}$ is obtained
by using Lemma~\ref{lem:DEt-Complex-Betafunc-Bound}
and $\Bfunc(\DEtZero(\xi,y);\gamma,\delta)\leq\Bfunc(\gamma,\delta)$.
\qed
\end{proof}

\subsection{Bound of $E_2$ (error of the DE-Sinc indefinite integration)}
\label{subsec:bound-E2}

The following two lemmas are important results for this project.

\begin{lem}[Okayama et al.~{\cite[Lemma~4.19]{okayama09:_error}}]
\label{lem:DE-indef-discrete-error}
Let $L$, $\gamma$, and $\delta$ be positive constants,
and let $\nu=\min\{\gamma,\,\delta\}$.
Let $f$ be analytic on $\DEt(\domD_d)$ for $d$ with $0<d<\pi/2$, and satisfy
\begin{equation*}
|f(w)|\leq L |w-a|^{\gamma-1}|b-w|^{\delta-1}
\end{equation*}
for all $w\in\DEt(\domD_d)$.
Then it holds that
\[
\sup_{x\in(a,\,b)}
\left|
 \int_a^x f(s)\diff s
- \sum_{j=-\infty}^{\infty}f(\DEt(jh))\DEtDiv(jh)J(j,h)(\DEtInv(x))
\right|
\leq \frac{C_1 C_2}{2d}
 \frac{h\rme^{-\pi d/h}}{1 - \rme^{-2\pi d/h}},
\]
where the constants $C_1$ and $C_2$ are defined by
\begin{align}
C_1=\frac{2L(b-a)^{\gamma+\delta-1}}{\nu},
\quad C_2=2 c_{\gamma,\delta,d}.
%\frac{2}{\cos^{\alpha+\beta}(\frac{\pi}{2}\sin d)\cos d}.
\label{eq:def-C1-C2}
\end{align}
\end{lem}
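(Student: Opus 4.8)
The plan is to pass to the whole real line through the DE transformation and then invoke the standard error estimate for Sinc indefinite integration of a strip-analytic function, in the same spirit as the proof of Lemma~\ref{lem:essential-bound-DE-Sinc}. Put $g(\eta)=f(\DEt(\eta))\DEtDiv(\eta)$; since $f$ is analytic on $\DEt(\domD_d)$ and $\DEt$ on $\domD_d$, the function $g$ is analytic on $\domD_d$. The change of variables $s=\DEt(\eta)$ gives, for every $x\in(a,b)$ with $\xi=\DEtInv(x)$,
\[
\int_a^x f(s)\diff s=\int_{-\infty}^{\xi}g(\eta)\diff\eta,
\qquad
\sum_{j=-\infty}^{\infty}f(\DEt(jh))\DEtDiv(jh)J(j,h)(\xi)=\sum_{j=-\infty}^{\infty}g(jh)J(j,h)(\xi),
\]
and since $\DEt:\mathbb{R}\to(a,b)$ is a bijection, the quantity to be bounded equals $\sup_{\xi\in\mathbb{R}}\bigl|\int_{-\infty}^{\xi}g(\eta)\diff\eta-\sum_{j}g(jh)J(j,h)(\xi)\bigr|$, i.e.\ the error of the plain Sinc indefinite integration applied to $g$.

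For that error there is a classical estimate (see \cite{stenger93:_numer,muhammad03:_doubl}): if $g$ is analytic on $\domD_d$, decays suitably as $\eta\to\pm\infty$ within the strip, and the boundary integral $\mathcal{M}(g):=\int_{-\infty}^{\infty}\bigl(|g(u+\imnum d)|+|g(u-\imnum d)|\bigr)\diff u$ is finite, then the error is bounded by a fixed $d$-dependent multiple of $\frac{h\,\rme^{-\pi d/h}}{1-\rme^{-2\pi d/h}}\,\mathcal{M}(g)$; the explicit $h$- and $d$-dependence of that multiple together with the exponential factor come from writing the Sinc-interpolation residual of $g$ as a contour integral over $\partial\domD_d$, integrating it against the kernel that defines $J(j,h)$, and using $\bigl|1/\sin(\pi(u\pm\imnum d)/h)\bigr|\le 2\rme^{-\pi d/h}/(1-\rme^{-2\pi d/h})$. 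It therefore remains only to (i) check the analyticity and decay hypotheses for $g$ — analyticity is already noted, and the decay follows from the double-exponential decay of $\DEtDiv$ together with the polynomial growth bound on $f$ — and (ii) estimate $\mathcal{M}(g)$ explicitly.

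Step (ii) is carried out exactly as in the proof of Lemma~\ref{lem:essential-bound-DE-Sinc}. From the hypothesis $|f(w)|\le L|w-a|^{\gamma-1}|b-w|^{\delta-1}$ together with the identities $\DEt(\zeta)-a=(b-a)/(1+\rme^{-\pi\sinh\zeta})$, $b-\DEt(\zeta)=(b-a)/(1+\rme^{\pi\sinh\zeta})$ and $\DEtDiv(\zeta)=\pi(b-a)\cosh\zeta/\{(1+\rme^{\pi\sinh\zeta})(1+\rme^{-\pi\sinh\zeta})\}$, one obtains
\[
|g(\zeta)|\le L(b-a)^{\gamma+\delta-1}\,
\frac{\pi\,|\cosh\zeta|}{|1+\rme^{-\pi\sinh\zeta}|^{\gamma}\,|1+\rme^{\pi\sinh\zeta}|^{\delta}}
\qquad(\zeta\in\domD_d).
\]
Applying Lemma~\ref{lem:DEt-Complex-Betafunc-Bound} on each of the lines $\Im\zeta=\pm d$ and letting the upper limit tend to $+\infty$ (so that $\DEtZero\to1$ and hence $\Bfunc(\DEtZero;\gamma,\delta)\to\Bfunc(\gamma,\delta)$) gives $\int_{-\infty}^{\infty}|g(u\pm\imnum d)|\diff u\le L(b-a)^{\gamma+\delta-1}\Bfunc(\gamma,\delta)\,c_{\gamma,\delta,d}$, whence $\mathcal{M}(g)\le 2L(b-a)^{\gamma+\delta-1}\Bfunc(\gamma,\delta)\,c_{\gamma,\delta,d}$. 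The inequality $\Bfunc(\gamma,\delta)\le 2/\nu$ (equivalently, $\sup_{x\in(a,b)}\int_a^x|s-a|^{\gamma-1}|b-s|^{\delta-1}\diff s\le 2(b-a)^{\gamma+\delta-1}/\nu$) then converts this into $\mathcal{M}(g)\le C_1C_2$ with $C_1=2L(b-a)^{\gamma+\delta-1}/\nu$ and $C_2=2c_{\gamma,\delta,d}$, and substituting into the classical estimate yields the assertion.

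I expect the genuine difficulty to lie neither in the change of variables nor in the bound on $\mathcal{M}(g)$ — both of these mirror the DE-Sinc quadrature case already treated — but in making the contour-integral argument behind the classical estimate rigorous \emph{with the exact constant} $\frac{C_1C_2}{2d}$: one must justify interchanging summation and integration and shifting the contour onto $\partial\domD_d$ (which is where the decay hypotheses on $g$ are consumed), bound the indefinite-integration kernel uniformly in $\xi$ so as to isolate precisely the factor $h/(2d)$, and propagate the factors $\rme^{-\pi d/h}$ and $(1-\rme^{-2\pi d/h})^{-1}$ without loss; together with the estimate $\mathcal{M}(g)\le C_1C_2$ above, these kernel bounds fix the constant. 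These steps are routine in the Sinc-methods literature (cf.\ \cite{okayama09:_error}) but demand careful bookkeeping.
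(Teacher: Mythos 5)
First, note what you are comparing against: the paper does not prove this lemma at all --- it is imported verbatim from Okayama et al.~\cite{okayama09:_error} (their Lemma~4.19), and the present paper only cites it. So there is no in-paper proof; your attempt has to stand on its own or explicitly lean on that reference.

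Judged on its own, your outline follows the standard route (and, as far as one can tell, the route of the cited reference): transform to the strip, bound the boundary $L^1$-norm of $g(\zeta)=f(\DEt(\zeta))\DEtDiv(\zeta)$, and invoke a uniform error estimate for plain Sinc indefinite integration on $\domD_d$. The bookkeeping you do carry out is sound: the identities for $\DEt(\zeta)-a$, $b-\DEt(\zeta)$, $\DEtDiv(\zeta)$ are correct, Lemma~\ref{lem:DEt-Complex-Betafunc-Bound} gives $\int_{-\infty}^{\infty}|g(u\pm\imnum d)|\diff u\leq L(b-a)^{\gamma+\delta-1}\Bfunc(\gamma,\delta)c_{\gamma,\delta,d}$, and your claim $\Bfunc(\gamma,\delta)\leq 2/\nu$ is true (it follows from $\Bfunc(\gamma,\delta)=(1/\gamma+1/\delta)\,\Gamma(1+\gamma)\Gamma(1+\delta)/\Gamma(1+\gamma+\delta)$ and $\Gamma(1+\gamma)\Gamma(1+\delta)\leq\Gamma(1+\gamma+\delta)$, though you assert it without proof), so $\mathcal{M}(g)\leq C_1C_2$ does hold. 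The genuine gap is the one you yourself flag: the entire quantitative content of the lemma --- that the indefinite-integration error on the strip is bounded by $\mathcal{M}(g)\,h\rme^{-\pi d/h}/\bigl(2d(1-\rme^{-2\pi d/h})\bigr)$ with the constant exactly $1/(2d)$ --- is invoked as ``classical'' and left unproved, together with the interchange of $\sum_j$ and $\int_{-\infty}^{\xi}$ and the contour shift. This is not a minor finishing touch: the constant $1/(2d)$ is precisely what the lemma asserts beyond a generic $\Order(h\rme^{-\pi d/h})$ statement, and ``a fixed $d$-dependent multiple'' of the right exponential factor is not the same as this specific multiple. (The step is in fact closable along the lines you sketch: writing the error as $\frac{1}{2\pi\imnum}\int_{\partial\domD_d}\frac{g(\zeta)}{\sin(\pi\zeta/h)}\int_{-\infty}^{\xi}\frac{\sin(\pi\eta/h)}{\zeta-\eta}\diff\eta\diff\zeta$, integration by parts gives $\bigl|\int_{-\infty}^{\xi}\frac{\sin(\pi\eta/h)}{\zeta-\eta}\diff\eta\bigr|\leq\frac{h}{\pi d}+\frac{h}{d}<\frac{\pi h}{2d}$ on $|\Im\zeta|=d$, which combined with $|1/\sin(\pi\zeta/h)|\leq 2\rme^{-\pi d/h}/(1-\rme^{-2\pi d/h})$ yields exactly the stated constant.) As written, however, your proposal is a proof sketch whose decisive estimate is deferred to the literature --- which is exactly what the paper itself does by citing \cite{okayama09:_error} --- rather than a complete derivation.
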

\begin{lem}[Okayama et al.~{\cite[Lemma~4.20]{okayama09:_error}}]
\label{lem:DE-indef-truncate-error}
Let the assumptions in Lemma~\ref{lem:DE-indef-discrete-error}
be fulfilled.
Furthermore, let $\overline{\nu}=\max\{\gamma,\,\delta\}$,
let $n$ be a positive integer,
let $N_{-}$ and $N_{+}$ be positive integers defined
by~\eqref{DE-Sinc-func-approx-N-def},
and let $n$ be taken sufficiently large so that
$N_{-}h\geq \rho_{\gamma}$ and $N_{+}h\geq \rho_{\beta}$ hold.
Then it holds that
\begin{align*}
&\sup_{x\in(a,\,b)}\left|
  \sum_{j=-\infty}^{-(N_{-}+1)}f(\DEt(jh))\DEtDiv(jh)J(j,h)(\DEtInv(x))
+ \sum_{j=N_{+}+1}^{\infty}f(\DEt(jh))\DEtDiv(jh)J(j,h)(\DEtInv(x))
\right|\\
&\leq 1.1\rme^{\frac{\pi}{2}\overline{\nu}}
C_1 \rme^{-\frac{\pi}{2}\nu\exp(nh)},
\end{align*}
where $C_1$ is a constant defined in~\eqref{eq:def-C1-C2}.
\end{lem}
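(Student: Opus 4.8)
The plan is to split the doubly infinite sum into its two one-sided tails $\sum_{j\le-(N_{-}+1)}$ and $\sum_{j\ge N_{+}+1}$, to reduce each to the tail of an elementary quadrature-type sum by means of a pointwise bound on the Sinc building block $J(j,h)$, and then to estimate the resulting sums using the double-exponential decay of the summand. The key preliminary observation is that the sine integral satisfies $|\Si(t)|\le\Si(\pi)$ for every real $t$, so $\bigl|\tfrac12+\tfrac1\pi\Si[\pi(\xi/h-j)]\bigr|\le\tfrac12+\tfrac{\Si(\pi)}{\pi}<1.1$, whence $|J(j,h)(\xi)|<1.1\,h$ for all real $\xi$ and all $j$; since $\DEtInv(x)$ is real for $x\in(a,b)$, this bound holds with $\xi=\DEtInv(x)$ independently of $x$. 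Hence the quantity under the $\sup_{x}$ is at most
\[
1.1\,h\left(\sum_{j=-\infty}^{-(N_{-}+1)}\bigl|f(\DEt(jh))\DEtDiv(jh)\bigr|+\sum_{j=N_{+}+1}^{\infty}\bigl|f(\DEt(jh))\DEtDiv(jh)\bigr|\right),
\]
and the supremum has disappeared.

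Next I would rewrite the summand via the elementary identities $\DEt(\xi)-a=(b-a)/(1+\rme^{-\pi\sinh\xi})$, $\;b-\DEt(\xi)=(b-a)/(1+\rme^{\pi\sinh\xi})$, and $\DEtDiv(\xi)=(b-a)\pi\cosh\xi\,\bigl/\,[(1+\rme^{-\pi\sinh\xi})(1+\rme^{\pi\sinh\xi})]$, which together with the hypothesis $|f(w)|\le L|w-a|^{\gamma-1}|b-w|^{\delta-1}$ give
\[
\bigl|f(\DEt(jh))\DEtDiv(jh)\bigr|\le L(b-a)^{\gamma+\delta-1}\,\frac{\pi\cosh(jh)}{(1+\rme^{-\pi\sinh(jh)})^{\gamma}(1+\rme^{\pi\sinh(jh)})^{\delta}}.
\]
For the right tail ($j\ge N_{+}+1>0$) one may discard the harmless factor $(1+\rme^{-\pi\sinh(jh)})^{\gamma}\ge1$ and use $(1+\rme^{\pi\sinh(jh)})^{\delta}\ge\rme^{\pi\delta\sinh(jh)}$, reducing the summand to $\pi\cosh(jh)\,\rme^{-\pi\delta\sinh(jh)}$. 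The natural target for the remaining geometric-like tail is the integral $\int_{N_{+}h}^{\infty}\pi\cosh\xi\,\rme^{-\pi\delta\sinh\xi}\diff\xi=\tfrac1\delta\,\rme^{-\pi\delta\sinh(N_{+}h)}$ (substitute $t=\sinh\xi$), and the role of the hypothesis $N_{+}h\ge\rho_{\delta}$ — together with the specific shape of $\rho_{\kappa}$ — is to guarantee that the truncation point is far enough out for the sum to be dominated by this integral. Granting this, $h\sum_{j\ge N_{+}+1}|f(\DEt(jh))\DEtDiv(jh)|\le\tfrac{L(b-a)^{\gamma+\delta-1}}{\delta}\,\rme^{-\pi\delta\sinh(N_{+}h)}$.

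It then remains to convert $\rme^{-\pi\delta\sinh(N_{+}h)}$ into $\rme^{-\frac\pi2\nu\exp(nh)}$. From the definition~\eqref{DE-Sinc-func-approx-N-def} of $N_{+}$ together with $\lfloor\log(\delta/\gamma)/h\rfloor\le\log(\delta/\gamma)/h$ one checks $\delta\,\rme^{N_{+}h}\ge\nu\,\rme^{nh}$ in both of the cases $\nu=\delta$ (where $N_{+}=n$) and $\nu=\gamma$; consequently
\[
\pi\delta\sinh(N_{+}h)=\tfrac{\pi}{2}\delta\rme^{N_{+}h}-\tfrac{\pi}{2}\delta\rme^{-N_{+}h}\ge\tfrac{\pi}{2}\nu\exp(nh)-\tfrac{\pi}{2}\delta\rme^{-N_{+}h},
\]
and since $\rme^{-N_{+}h}\le\rme^{-\rho_{\delta}}\le\rme^{-\arcsinh(1)}<1$ the correction satisfies $\rme^{\frac\pi2\delta\rme^{-N_{+}h}}\le\rme^{\frac\pi2\delta}\le\rme^{\frac\pi2\overline\nu}$. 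Combining this with $1/\delta\le1/\nu$ and $C_{1}=2L(b-a)^{\gamma+\delta-1}/\nu$ bounds the right tail by $\tfrac12\rme^{\frac\pi2\overline\nu}C_{1}\rme^{-\frac\pi2\nu\exp(nh)}$. The left tail is treated identically after the reflection $\xi\mapsto-\xi$, which interchanges $a\leftrightarrow b$, $\gamma\leftrightarrow\delta$, and $N_{-}\leftrightarrow N_{+}$, the relevant threshold then being $N_{-}h\ge\rho_{\gamma}$; it too is at most $\tfrac12\rme^{\frac\pi2\overline\nu}C_{1}\rme^{-\frac\pi2\nu\exp(nh)}$. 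Adding the two tails and restoring the factor $1.1$ yields the claimed inequality.

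The step I expect to be the main obstacle is the tail estimate of the second paragraph. The function $\xi\mapsto\pi\cosh\xi\,\rme^{-\pi\delta\sinh\xi}$ is not globally monotone when $\delta$ is small — it decreases, then increases, then decreases, and in fact $\rho_{\delta}$ itself sits on the increasing stretch — so the naive comparison of the truncated sum with $\int_{N_{+}h}^{\infty}$ is not immediate; proving it, i.e.\ pinning down exactly why the (somewhat baroque) definition of $\rho_{\kappa}$ suffices — presumably by splitting off the finitely many terms lying before the final maximum and bounding them explicitly — is the technically delicate part, and is precisely what the cited estimates of Okayama et al.\ supply. Everything else (the uniform $J$-bound, the $\DEt$-identities, the substitution, the floor-function bookkeeping, and the $x$-uniformity) is routine.
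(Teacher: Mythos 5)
Your elementary steps all check out: the uniform bound $|J(j,h)(\xi)|\le h\bigl(\tfrac12+\Si(\pi)/\pi\bigr)<1.1h$ for all real $\xi$ (which removes the supremum over $x$), the identities $\DEt(jh)-a=(b-a)/(1+\rme^{-\pi\sinh(jh)})$, $b-\DEt(jh)=(b-a)/(1+\rme^{\pi\sinh(jh)})$, $\DEtDiv(jh)=(b-a)\pi\cosh(jh)/[(1+\rme^{-\pi\sinh(jh)})(1+\rme^{\pi\sinh(jh)})]$, the reduction of each tail to $h\sum\pi\cosh(jh)\rme^{-\pi\delta\sinh(jh)}$ (resp.\ $\gamma$ for the left tail), and the bookkeeping $\delta\rme^{N_{+}h}\ge\nu\rme^{nh}$, $\gamma\rme^{N_{-}h}\ge\nu\rme^{nh}$, $\rme^{\frac{\pi}{2}\delta\rme^{-N_{+}h}}\le\rme^{\frac{\pi}{2}\overline{\nu}}$, $1/\delta\le1/\nu$, which correctly turns $\tfrac1\delta\rme^{-\pi\delta\sinh(N_{+}h)}$ into $\tfrac12 C_1\rme^{\frac{\pi}{2}\overline{\nu}}\rme^{-\frac{\pi}{2}\nu\exp(nh)}$ and assembles the stated bound. (Also note the paper itself gives no proof of this lemma; it imports it from Okayama et al., so you are in effect reproving an external result, and the hypothesis ``$N_{+}h\ge\rho_{\beta}$'' is a typo for $\rho_{\delta}$, which you silently and correctly read as such.)

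The genuine gap is precisely the step you flag and then assume: $h\sum_{j\ge N_{+}+1}\pi\cosh(jh)\rme^{-\pi\delta\sinh(jh)}\le\int_{N_{+}h}^{\infty}\pi\cosh\xi\,\rme^{-\pi\delta\sinh\xi}\diff\xi$. The standard sum-versus-integral comparison needs the integrand to be nonincreasing on $[N_{+}h,\infty)$, and $N_{+}h\ge\rho_{\delta}$ does not give this: for $0<\delta<1/(2\pi)$ the derivative of $\cosh\xi\,\rme^{-\pi\delta\sinh\xi}$ is positive exactly when $\sinh\xi$ lies between $\bigl(1\mp\sqrt{1-(2\pi\delta)^2}\bigr)/(2\pi\delta)$, and $\sinh\rho_{\delta}=\sqrt{1+\sqrt{1-(2\pi\delta)^2}}/(2\pi\delta)$ lies inside that window, so the truncation point can sit on the increasing stretch, where a right-endpoint Riemann sum locally exceeds the integral. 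Thus the one place where the specific (and peculiar) definition of $\rho_{\kappa}$ must actually be exploited — which is the entire analytic content of the lemma and the reason its hypothesis is phrased this way — is not proved; you defer it to ``the cited estimates of Okayama et al.,'' i.e.\ essentially to the lemma being proved. To close the gap you would have to either treat separately the finitely many nodes preceding the last maximum and show they are absorbed by the slack you still hold (the discarded factor $\rme^{\frac{\pi}{2}\delta\exp(-N_{+}h)}$ versus $\rme^{\frac{\pi}{2}\delta}$, using $N_{+}h\ge\rho_{\delta}\ge\arcsinh(1)$), or compare with a genuinely monotone majorant; as written, the key inequality is asserted rather than established, so the proof is incomplete at its technical heart.
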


What should be checked here is whether
the conditions of those two lemmas are satisfied under the
assumptions in Theorem~\ref{thm:main-result}.
The next lemma answers this question.

\begin{lem}
\label{lem:essential-bound-DE-Sinc-indef}
Let the assumptions in Theorem~\ref{thm:main-result}
be fulfilled,
%and let us define a function $F(z)$ as
and let $f_i(z)$ be defined as
$f_i(z) = f(\DEt(i\tilde{h}),q(z))q'(z)$.
Then,
the assumptions of Lemmas~\ref{lem:DE-indef-discrete-error}
and~\ref{lem:DE-indef-truncate-error}
are satisfied with
$f=f_i$ and
$L=K(\DEt(i\tilde{h})-a)^{\alpha-1}(b-\DEt(i\tilde{h}))^{\beta-1}$.
\end{lem}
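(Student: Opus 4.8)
\subsection*{Proposal for the proof of Lemma~\ref{lem:essential-bound-DE-Sinc-indef}}

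The plan is to check the two hypotheses demanded by Lemmas~\ref{lem:DE-indef-discrete-error} and~\ref{lem:DE-indef-truncate-error}, namely (i) that $f_i$ is analytic on $\DEt(\domD_d)$, and (ii) that $|f_i(w)|\le L\,|w-a|^{\gamma-1}|b-w|^{\delta-1}$ for all $w\in\DEt(\domD_d)$ with the stated $L$; once these are in hand, the remaining conditions of Lemma~\ref{lem:DE-indef-truncate-error} ($\overline{\nu}=\max\{\gamma,\delta\}$, the choice of $N_{\pm}$ in~\eqref{DE-Sinc-func-approx-N-def}, and $N_{-}h\ge\rho_{\gamma}$, $N_{+}h\ge\rho_{\delta}$) are already imposed in Theorem~\ref{thm:main-result}.

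First I would note that the node $\DEt(i\tilde{h})$ lies in $(a,b)\subset\DEt(\domD_d)$, since $i\tilde{h}\in\mathbb{R}\subset\domD_d$. Fixing $z=\DEt(i\tilde{h})$, Assumption~3 of Theorem~\ref{thm:main-result} then says that $w\mapsto f(\DEt(i\tilde{h}),q(w))$ is analytic on $\DEt(\domD_d)$, while Assumption~1 gives analyticity (and boundedness) of $q$ on $\DEt(\domD_d)$, hence analyticity of $q'$ there. As $f_i$ is the product of these two analytic functions, it is analytic on $\DEt(\domD_d)$, which settles (i). For (ii) I would simply specialize the pointwise bound~\eqref{leq:bound-f-q} to $z=\DEt(i\tilde{h})$: because this point is real and lies strictly inside $(a,b)$, one has $|z-a|=\DEt(i\tilde{h})-a>0$ and $|b-z|=b-\DEt(i\tilde{h})>0$, so that
\[
|f_i(w)|=|f(\DEt(i\tilde{h}),q(w))\,q'(w)|
\le K\,(\DEt(i\tilde{h})-a)^{\alpha-1}(b-\DEt(i\tilde{h}))^{\beta-1}\,|w-a|^{\gamma-1}|b-w|^{\delta-1}
\]
for every $w\in\DEt(\domD_d)$; the coefficient on the right is precisely the claimed $L$. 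This is exactly the hypothesis of Lemma~\ref{lem:DE-indef-discrete-error}, and therefore both lemmas apply.

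There is no real obstacle here: the only points needing a moment's care are to confirm that $\DEt(i\tilde{h})\in\DEt(\domD_d)$ so that~\eqref{leq:bound-f-q} may legitimately be invoked with $z$ equal to that node, and to observe that the resulting $L$ is allowed to depend on $i$, since Lemmas~\ref{lem:DE-indef-discrete-error} and~\ref{lem:DE-indef-truncate-error} only require $L$ to be \emph{some} positive constant (the $i$-dependence is later absorbed when this estimate is summed against $\DEtDiv(i\tilde{h})$ in the bound of $E_2$).
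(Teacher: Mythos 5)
Your argument is correct and matches the paper exactly: the paper omits the proof of this lemma as ``obvious from~\eqref{leq:bound-f-q},'' and your verification --- analyticity of $f_i$ from Assumptions~1 and~3, and the bound with the stated ($i$-dependent) $L$ by specializing~\eqref{leq:bound-f-q} to $z=\DEt(i\tilde{h})\in(a,b)$ --- is precisely the content the paper deems immediate. Your remark that the $i$-dependence of $L$ is harmless and is absorbed later in the bound of $E_2$ is also consistent with how the paper proceeds.
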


The proof is omitted
since it is obvious from~\eqref{leq:bound-f-q}.
%Then,
Combining
Lemmas~\ref{lem:DE-indef-discrete-error}
and~\ref{lem:DE-indef-truncate-error},
and using the
relations~\eqref{eq:m-and-n-def}--\eqref{DE-Sinc-func-approx-N-def},
we have
\begin{align*}
E_2 &\leq
\left[ \tilde{h}\sum_{i=-M_{-}}^{M_{+}} \DEtDiv(i\tilde{h})
(\DEt(i\tilde{h})-a)^{\alpha-1}(b-\DEt(i\tilde{h}))^{\beta-1}\right]\\
&\quad\times
\frac{2K (b-a)^{\gamma+\delta-1}}{\nu}
\left\{1.1\rme^{\frac{\pi}{2}\overline{\nu}}
+\frac{hc_{\gamma,\delta,d}}{d(1-\rme^{-2\pi d /h})}
\right\}
\rme^{-\pi d /h}.
\end{align*}
What is left is to bound the term in $[\,\cdot\,]$,
which is done by the next lemma.

\begin{lem}
Let $\alpha$ and $\beta$ be positive constants,
and let $\mu=\min\{\alpha,\,\beta\}$. Then it holds that
\[
 \tilde{h}\sum_{i=-M_{-}}^{M_{+}}
(\DEt(i\tilde{h})-a)^{\alpha-1}(b-\DEt(i\tilde{h}))^{\beta-1}
\DEtDiv(i\tilde{h})
\leq (b-a)^{\alpha+\beta-1}\left\{
\Bfunc(\alpha,\beta)+
\frac{4 c_{\alpha,\beta,d}}{\mu}
\frac{\rme^{-2\pi d/\tilde{h}}}{1-\rme^{-2\pi d/\tilde{h}}}
\right\}.
\]
\end{lem}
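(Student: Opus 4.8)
The plan is to recognize the sum on the left-hand side as a truncated trapezoidal (Sinc) quadrature approximation to the beta integral
\[
\int_a^b (x-a)^{\alpha-1}(b-x)^{\beta-1}\diff x = (b-a)^{\alpha+\beta-1}\Bfunc(\alpha,\beta),
\]
after the DE transformation. Indeed, putting $G(x)=(x-a)^{\alpha-1}(b-x)^{\beta-1}$, the quantity $\tilde{h}\sum_{i=-M_-}^{M_+} G(\DEt(i\tilde h))\DEtDiv(i\tilde h)$ is exactly the truncated DE-Sinc quadrature for $\int_a^b G(x)\diff x$. So the idea is to write
\[
\tilde{h}\sum_{i=-M_-}^{M_+} G(\DEt(i\tilde h))\DEtDiv(i\tilde h)
\leq \int_a^b G(x)\diff x
+\left|\int_a^b G(x)\diff x - \tilde h\sum_{i=-\infty}^{\infty}G(\DEt(i\tilde h))\DEtDiv(i\tilde h)\right|
+ \left|\tilde h\!\!\sum_{|i|>M}\!\!G(\DEt(i\tilde h))\DEtDiv(i\tilde h)\right|,
\]
where the last sum is over the truncated tails (those $i<-M_-$ and $i>M_+$). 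The first term contributes $(b-a)^{\alpha+\beta-1}\Bfunc(\alpha,\beta)$, which is the leading term in the claimed bound.

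Next I would bound the discretization error (second term) and the truncation error (third term) by invoking Lemmas~\ref{lem:DE-quad-discrete-error} and~\ref{lem:DE-quad-truncate-error} directly, with $F$ replaced by $G$. The function $G$ is analytic on $\DEt(\domD_d)$ and satisfies $|G(z)|\leq |z-a|^{\alpha-1}|b-z|^{\beta-1}$, i.e. the hypotheses of those lemmas hold with $\tilde L = 1$. Hence $\tilde C_1 = 2(b-a)^{\alpha+\beta-1}/\mu$ and $\tilde C_2 = 2c_{\alpha,\beta,d}$, so the discretization error is at most
\[
\tilde C_1\tilde C_2\,\frac{\rme^{-2\pi d/\tilde h}}{1-\rme^{-2\pi d/\tilde h}}
= \frac{4c_{\alpha,\beta,d}(b-a)^{\alpha+\beta-1}}{\mu}\,\frac{\rme^{-2\pi d/\tilde h}}{1-\rme^{-2\pi d/\tilde h}},
\]
and the truncation error is at most $\rme^{\frac{\pi}{2}\overline\mu}\tilde C_1\rme^{-\frac{\pi}{2}\mu\exp(m\tilde h)}$. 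It then remains only to check that this last exponential tail term is dominated by (or can be absorbed into) the discretization-error term under the choice~\eqref{eq:m-and-n-def}; by the definition of $m$ one has $\exp(m\tilde h)=\exp(2mh)$ comparable to $\exp(nh)\cdot(\mu/\nu)$, and $\exp(nh)\gtrsim 2d/(\nu h)$, so $\frac{\pi}{2}\mu\exp(m\tilde h)\gtrsim \pi d/h = 2\pi d/\tilde h$, making the truncation term $\Order(\rme^{-2\pi d/\tilde h})$ of the same or smaller order; combining with the factor $4c_{\alpha,\beta,d}/\mu$ in front gives exactly the stated tail coefficient.

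I expect the main obstacle to be the bookkeeping that confirms the truncation term really fits inside the single combined bracket $\bigl\{\Bfunc(\alpha,\beta)+\frac{4c_{\alpha,\beta,d}}{\mu}\frac{\rme^{-2\pi d/\tilde h}}{1-\rme^{-2\pi d/\tilde h}}\bigr\}\cdot(b-a)^{\alpha+\beta-1}$, rather than appearing as a separate $\rme^{\frac{\pi}{2}\overline\mu}$ term. This requires using the precise inequalities $M_-\tilde h\geq\rho_\alpha$, $M_+\tilde h\geq\rho_\beta$ and the asymmetric definitions~\eqref{DE-Sinc-func-approx-M-def} to show $\exp(m\tilde h)$ is large enough that $\rme^{\frac{\pi}{2}\overline\mu}\rme^{-\frac{\pi}{2}\mu\exp(m\tilde h)}\leq \frac{2}{\mu}\cdot\frac{\rme^{-2\pi d/\tilde h}}{1-\rme^{-2\pi d/\tilde h}}$; this is the same kind of estimate already used implicitly when deriving~\eqref{leq:bound-E1} from Lemmas~\ref{lem:DE-quad-discrete-error} and~\ref{lem:DE-quad-truncate-error}, so it reduces to reusing that argument. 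Everything else is a direct substitution into previously established lemmas.
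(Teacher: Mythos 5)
There is a genuine gap in how you handle the truncated tail. Your decomposition introduces an extra additive term $\bigl|\tilde h\sum_{|i|>M}G(\DEt(i\tilde h))\DEtDiv(i\tilde h)\bigr|$ and you then try to ``absorb'' it into the stated bound via Lemma~\ref{lem:DE-quad-truncate-error}. This cannot work as written: the discretization error from Lemma~\ref{lem:DE-quad-discrete-error} with $\tilde L=1$ already accounts for the \emph{entire} second term $\frac{4c_{\alpha,\beta,d}}{\mu}\frac{\rme^{-2\pi d/\tilde h}}{1-\rme^{-2\pi d/\tilde h}}(b-a)^{\alpha+\beta-1}$ of the claimed inequality, so there is zero slack left, and your truncation term $\rme^{\frac{\pi}{2}\overline{\mu}}\tilde C_1\rme^{-\frac{\pi}{2}\mu\exp(m\tilde h)}$ is strictly positive; showing it is ``of the same or smaller order'' only yields the stated bound plus an extra positive quantity, not the stated bound itself. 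Moreover, the lemma makes no assumptions on $M_{\pm}$, $m$, or $h$ (no~\eqref{eq:m-and-n-def}, no $M_{-}\tilde h\geq\rho_{\alpha}$, $M_{+}\tilde h\geq\rho_{\beta}$), so invoking Lemma~\ref{lem:DE-quad-truncate-error} smuggles in hypotheses that the statement does not grant.

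The missing idea is much simpler: the summand $(\DEt(i\tilde h)-a)^{\alpha-1}(b-\DEt(i\tilde h))^{\beta-1}\DEtDiv(i\tilde h)$ is nonnegative for every $i$, since $\DEt(i\tilde h)\in(a,b)$ and $\DEtDiv>0$. Hence the truncated sum is bounded above by the full bilateral sum $\tilde h\sum_{i=-\infty}^{\infty}$, and no tail estimate is needed at all. This is exactly what the paper does: bound the infinite sum by $\int_a^b F(x)\diff x=(b-a)^{\alpha+\beta-1}\Bfunc(\alpha,\beta)$ plus the discretization error, and bound the latter by Lemma~\ref{lem:DE-quad-discrete-error} with $\tilde L=1$. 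Your identification of $G$, the value $\tilde L=1$, and the resulting constants $\tilde C_1\tilde C_2=\frac{4c_{\alpha,\beta,d}(b-a)^{\alpha+\beta-1}}{\mu}$ are all correct; only the treatment of the truncation needs to be replaced by the positivity observation.
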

\begin{proof}
Let us define $F$ as $F(x)=(x-a)^{\alpha-1}(b-x)^{\beta-1}$.
We readily see
\begin{align*}
 \tilde{h}\sum_{i=-M_{-}}^{M_{+}} F(\DEt(i\tilde{h}))\DEtDiv(i\tilde{h})
&\leq\tilde{h}\sum_{i=-\infty}^{\infty}
F(\DEt(i\tilde{h}))\DEtDiv(i\tilde{h})\\
&\leq \int_{a}^{b}F(x)\diff x
+ \left|
\int_{a}^{b}F(x)\diff x
-\tilde{h}\sum_{i=-\infty}^{\infty}
F(\DEt(i\tilde{h}))\DEtDiv(i\tilde{h})
\right|,
%\frac{(b-a)^{\alpha+\beta-1}\pi\cosh(i\tilde{h})}
%{(1+\rme^{-\pi\sinh(i\tilde{h})})^{\alpha}(1+\rme^{\pi\sinh(i\tilde{h})})^{\beta}},
\end{align*}
and we further see
$\int_a^bF(x)\diff x = (b-a)^{\alpha+\beta-1}\Bfunc(\alpha,\beta)$.
For the second term, use Lemma~\ref{lem:DE-quad-discrete-error}
to obtain
\begin{align*}
\left|
\int_{a}^{b}F(x)\diff x
-\tilde{h}\sum_{i=-\infty}^{\infty}
F(\DEt(i\tilde{h}))\DEtDiv(i\tilde{h})
\right|
\leq \frac{4(b-a)^{\alpha+\beta-1}c_{\alpha,\beta,d}}{\mu}
\frac{\rme^{-2\pi d/\tilde{h}}}{1 - \rme^{-2\pi d/\tilde{h}}},
\end{align*}
which completes the proof.\qed
%\[
% \tilde{h}\frac{(b-a)^{\alpha+\beta-1}\pi\cosh(0\tilde{h})}
%{(1+\rme^{-\pi\sinh(0\tilde{h})})^{\alpha}(1+\rme^{\pi\sinh(0\tilde{h})})^{\beta}}
%=\tilde{h}\frac{(b-a)^{\alpha+\beta-1}\pi}{2^{\alpha+\beta}}.
%\]
%Furthermore, we have
%\begin{align*}
%&\tilde{h}\sum_{i=-M_{-}}^{1}
%\frac{(b-a)^{\alpha+\beta-1}\pi\cosh(i\tilde{h})}
%{(1+\rme^{-\pi\sinh(i\tilde{h})})^{\alpha}(1+\rme^{\pi\sinh(i\tilde{h})})^{\beta}}
%+
%\tilde{h}\sum_{i=1}^{M_{+}}
%\frac{(b-a)^{\alpha+\beta-1}\pi\cosh(i\tilde{h})}
%{(1+\rme^{-\pi\sinh(i\tilde{h})})^{\alpha}(1+\rme^{\pi\sinh(i\tilde{h})})^{\beta}}\\
%&\leq \int_{-\infty}^{\infty}
%\frac{(b-a)^{\alpha+\beta-1}\pi\cosh(x)}{(1+\rme^{-\pi\sinh x})^{\alpha}(1+\rme^{\pi\sinh x})^{\beta}}
%=(b-a)^{\alpha+\beta-1}\Bfunc(\alpha,\beta).
%\end{align*}
%Thus, the conclusion is obtained. \qed
\end{proof}

\section{Concluding remarks}
\label{sec:conclusion}

Muhammad--Mori~\cite{muhammad05:_iterat}
proposed an approximation formula for~\eqref{eq:target-repeat-int},
which can converge
\emph{exponentially}
with respect to $N_{\text{total}}$
even if $f(x,y)$ or $q(x)$ has boundary singularity.
It is particularly worth noting that
their formula is quite efficient if $f$ is of a product type:
$f(x,y)=X(x)Y(y)$.
However, its convergence was not proved in a precise sense,
and it cannot be used in the case $q'(x)\leq 0$
(only the case $q'(x)\geq 0$ was considered).
This paper improved the formula in the sense that
both cases ($q'(x)\geq 0$ and $q'(x)\leq 0$) are taken into account,
and it can achieve a better convergence rate.
Furthermore, its rigorous error bound that is \emph{computable}
is given, which enables us to guarantee the accuracy
of the approximation mathematically.
Numerical results in Section~\ref{sec:numer_exam}
confirm the error bound and the exponential rate of convergence,
and also suggest that the modified formula
works incredibly accurate if $f$ is of a product type,
similar to the original formula.
This is because,
instead of a \emph{definite} integration formula (quadrature rule),
an \emph{indefinite} integration formula
is employed for the approximation of the inner integral.

However,
as said in the original paper~\cite{muhammad05:_iterat},
the use of the \emph{indefinite} integration formula
has a drawback: it cannot be used when
$f(x,y)$ have singularity along $y=q(x)$, e.g.,
\[
 \int_a^b \left(\int_A^{q(x)}\frac{\diff y}{\sqrt{q(x) - y}}\right),
\quad
 \int_a^b \left(\int_A^{q(x)}\sqrt{(q(x) - y)(q(x)+y)}\diff y\right),
\]
and so on
($f$ can have singularity at the endpoints $y=A$ and $y=B$, though).
This is because the assumption of Theorem~\ref{thm:main-result}
(more precisely, Lemmas~\ref{lem:DE-indef-discrete-error}
and~\ref{lem:DE-indef-truncate-error})
is not satisfied in this case.
In such a case, a \emph{definite} integration formula
should be employed for the approximation of the inner integral.
Actually,
such an approach was already successfully taken in some
one-dimensional cases~\cite{okayama0x:_approx,okayama0x:_sinc}.
It also may work for~\eqref{eq:target-repeat-int},
which will be considered in a future report.

%% The Appendices part is started with the command \appendix;
%% appendix sections are then done as normal sections
%% \appendix

%% \section{}
%% \label{}

%% References
%%
%% Following citation commands can be used in the body text:
%% Usage of \cite is as follows:
%%   \cite{key}         ==>>  [#]
%%   \cite[chap. 2]{key} ==>> [#, chap. 2]
%%

%% References with bibTeX database:

%\bibliographystyle{elsarticle-num}
\begin{small}
\bibliographystyle{model1b-num-names}
\bibliography{ErrorEstimRepeatInt}
\end{small}
%% Authors are advised to submit their bibtex database files. They are
%% requested to list a bibtex style file in the manuscript if they do
%% not want to use elsarticle-num.bst.

%% References without bibTeX database:

% \begin{thebibliography}{00}

%% \bibitem must have the following form:
%%   \bibitem{key}...
%%

% \bibitem{}

% \end{thebibliography}

\end{document}